\tikzset{vertex/.style={circle,draw,fill,inner sep=0pt,minimum size=1mm}}
\theoremstyle{plain}
\newtheorem{thm}{Theorem}
\newtheorem{lem}[thm]{Lemma}
\newtheorem{prop}[thm]{Proposition}
\newtheorem{cor}[thm]{Corollary}
\newtheorem{question}[thm]{Question}
\theoremstyle{definition}
\newtheorem{definition}[thm]{Definition}
\newtheorem{exl}[thm]{Example}
\numberwithin{thm}{section}
\newcommand{\adj}{\leftrightarrow}
\newcommand{\adjeq}{\leftrightarroweq}
\def\Z{{\mathbb Z}}
\def\N{{\mathbb N}}
\begin{document}

\title{Approximate Fixed Point Properties in Digital Topology}
\author{Laurence Boxer
\thanks{
    Department of Computer and Information Sciences,
    Niagara University,
    Niagara University, NY 14109, USA;
    and Department of Computer Science and Engineering,
    State University of New York at Buffalo.
    email: boxer@niagara.edu 
}
}

\date{}
\maketitle{}
\begin{abstract}
We study the approximate fixed point property (AFPP) for
continuous single-valued functions and for
continuous multivalued functions in digital topology. We extend what is known about
these notions and discuss errors that have appeared
in the literature.
\end{abstract}

\section{Introduction}
In digital topology, we study properties of digital
images that are inspired by classical topology.
A digital version of continuous functions has been
developed, and researchers have had success in
studying digital versions of connectedness, homotopy,
fundamental groups, homology, et al., such that
digital images resemble the Euclidean objects they model
with respect to these properties. 

However, the fixed point properties of digital images
are often quite different from those of their
Euclidean inspirations. E.g., while there are
many examples of topological spaces with fixed point 
property (FPP), it is known~\cite{BEKLL} that a 
digital image $X$ has the FPP if and only if $X$ has
a single point. Therefore, the study of
{\em almost}~\cite{Rosenfeld,Tsaur} or
{\em approximate}~\cite{BEKLL} {\em fixed points} and
the {\em almost/approximate fixed point property} (AFPP) is
often of interest.

The paper~\cite{CinKar} introduced the AFPP for
continuous multivalued functions on digital images and
obtained some results for this property, but provided
no examples of digital images with this property.
We provide examples in this paper
along with additional general results concerning the
AFPP for continuous single-valued and multivalued functions on 
digital images. We also discuss errors that appeared in
the papers~\cite{Han19,CinKar}.

\section{Preliminaries}
Much of this section is quoted or 
paraphrased from the references.

We use $\Z$ to indicate the set of integers.

\subsection{Adjacencies}
The $c_u$-adjacencies are commonly used.
Let $x,y \in \Z^n$, $x \neq y$, where we consider these points as $n$-tuples of integers:
\[ x=(x_1,\ldots, x_n),~~~y=(y_1,\ldots,y_n).
\]
Let $u \in \Z$,
$1 \leq u \leq n$. We say $x$ and $y$ are 
{\em $c_u$-adjacent} if
\begin{itemize}
\item there are at most $u$ indices $i$ for which 
      $|x_i - y_i| = 1$, and
\item for all indices $j$ such that $|x_j - y_j| \neq 1$ we
      have $x_j=y_j$.
\end{itemize}
Often, a $c_u$-adjacency is denoted by the number of points
adjacent to a given point in $\Z^n$ using this adjacency.
E.g.,
\begin{itemize}
\item In $\Z^1$, $c_1$-adjacency is 2-adjacency.
\item In $\Z^2$, $c_1$-adjacency is 4-adjacency and
      $c_2$-adjacency is 8-adjacency.
\item In $\Z^3$, $c_1$-adjacency is 6-adjacency,
      $c_2$-adjacency is 18-adjacency, and $c_3$-adjacency
      is 26-adjacency.
\item In $\Z^n$, $c_1$-adjacency is $2n$-adjacency and $c_n$-adjacency is $(3^n - 1)$-adjacency.
\end{itemize}

For $\kappa$-adjacent $x,y$, we write $x \adj_{\kappa} y$ or $x \adj y$ when $\kappa$ is understood.
We write $x \adjeq_{\kappa} y$ or $x \adjeq y$ to mean that either $x \adj_{\kappa} y$ or $x = y$.
We say subsets $A,B$ of a digital image $X$ are ($\kappa$-)adjacent, $A \adjeq_{\kappa} B$ or
$A \adjeq B$ when $\kappa$ is understood, if there exist $a \in A$ and $b \in B$ such that
$a \adjeq_{\kappa} b$.

We say $\{x_n\}_{n=0}^k \subset (X,\kappa)$ is a {\em $\kappa$-path} (or a {\em path} if $\kappa$ is understood)
from $x_0$ to $x_k$ if $x_i \adjeq_{\kappa} x_{i+1}$ for $i \in \{0,\ldots,k-1\}$, and $k$ is the {\em length} of the path.

Another adjacency we will use is the following.

\begin{definition}
{\rm \cite{StaeckerBU}}
In a digital image $(X,\lambda)$, $x \adj_{\lambda^k} y$ if there is a $\lambda$-path of length
at most~$k$ in $X$ from $x$ to $y$.
\end{definition}

A subset $Y$ of a digital image $(X,\kappa)$ is
{\em $\kappa$-connected}~\cite{Rosenfeld},
or {\em connected} when $\kappa$
is understood, if for every pair of points $a,b \in Y$ there
exists a $\kappa$-path in $Y$ from $a$ to $b$.

\subsection{Digitally continuous functions}
The following generalizes a definition of~\cite{Rosenfeld}.

\begin{definition}
\label{continuous}
{\rm ~\cite{Boxer99}}
Let $(X,\kappa)$ and $(Y,\lambda)$ be digital images. A single-valued function
$f: X \rightarrow Y$ is {\em $(\kappa,\lambda)$-continuous} if for
every $\kappa$-connected $A \subset X$ we have that
$f(A)$ is a $\lambda$-connected subset of $Y$.
If $(X,\kappa)=(Y,\lambda)$, we say such a function is {\em $\kappa$-continuous},
denoted $f \in C(X,\kappa)$.
$\Box$
\end{definition}

When the adjacency relations are understood, we will simply say that $f$ is \emph{continuous}. Continuity can be expressed in terms of adjacency of points:
\begin{thm}
{\rm ~\cite{Rosenfeld,Boxer99}}
A single-valued function $f:X\to Y$ is continuous if and only if $x \adj x'$ in $X$ implies $f(x) \adjeq f(x')$. \qed
\end{thm}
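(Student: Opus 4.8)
The plan is to prove the two implications separately, in each case reducing the condition to the path-based definition of $\lambda$-connectedness.

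First I would handle the forward implication. Assuming $f$ is $(\kappa,\lambda)$-continuous, suppose $x \adj x'$ in $X$. Then the two-point set $A = \{x,x'\}$ is $\kappa$-connected, since the sequence $x, x'$ is itself a $\kappa$-path in $A$ (here $x \adj x'$ yields $x \adjeq x'$). By Definition~\ref{continuous}, $f(A) = \{f(x), f(x')\}$ must be $\lambda$-connected. I would then observe that a two-point set is $\lambda$-connected precisely when its two points are $\lambda$-adjacent-or-equal: any $\lambda$-path joining $f(x)$ to $f(x')$ within $\{f(x),f(x')\}$ takes consecutive $\adjeq_\lambda$ steps among these two values, forcing $f(x) \adjeq f(x')$, while if $f(x) = f(x')$ this holds trivially. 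This gives the stated pointwise condition.

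For the converse, assume $x \adj x'$ implies $f(x) \adjeq f(x')$, and let $A \subset X$ be any $\kappa$-connected set; I must show $f(A)$ is $\lambda$-connected. Given two points $p = f(a)$ and $q = f(b)$ of $f(A)$ with $a,b \in A$, connectedness of $A$ furnishes a $\kappa$-path $a = a_0, a_1, \ldots, a_k = b$ lying in $A$, so that $a_i \adjeq_{\kappa} a_{i+1}$ for each $i$. The key step is to push this path forward: for each $i$, either $a_i = a_{i+1}$, whence $f(a_i) = f(a_{i+1})$, or $a_i \adj a_{i+1}$, whence the hypothesis gives $f(a_i) \adjeq f(a_{i+1})$. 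In both cases $f(a_i) \adjeq_{\lambda} f(a_{i+1})$, so $f(a_0), \ldots, f(a_k)$ is a $\lambda$-path in $f(A)$ from $p$ to $q$. As $p,q$ were arbitrary, $f(A)$ is $\lambda$-connected, and hence $f$ is continuous.

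The argument is essentially a bookkeeping exercise, and the only point requiring care—really the crux of both directions—is the systematic handling of the adjacent-or-equal relation $\adjeq$: paths are built from $\adjeq_{\kappa}$ steps rather than strict adjacencies, so in the forward direction I must not overlook the possibility $f(x)=f(x')$, and in the converse I must split each path-step into the equality and strict-adjacency subcases in order to invoke the hypothesis correctly. No deeper structural property of the $c_u$-adjacencies or of $\Z^n$ is needed, and the proof goes through verbatim for arbitrary adjacency relations.
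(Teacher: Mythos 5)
Your proof is correct and complete: the forward direction correctly reduces to the connectedness of the two-point image set, and the converse correctly pushes a $\kappa$-path forward step by step, splitting each $\adjeq$ step into its equality and strict-adjacency cases. The paper itself states this theorem as a known result (citing Rosenfeld and Boxer) with no proof included, and your argument is precisely the standard one from those references, so there is nothing further to compare.
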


Composition preserves continuity, in the sense of the following.

\begin{thm}
{\rm \cite{Boxer99}}
\label{composition}
Let $(X,\kappa)$, $(Y,\lambda)$, and $(Z,\mu)$ be digital images.
Let $f: X \to Y$ be $(\kappa,\lambda)$-continuous and let
$g: Y \to Z$ be $(\lambda,\mu)$-continuous. Then
$g \circ f: X \to Z$ is $(\kappa,\mu)$-continuous.
\end{thm}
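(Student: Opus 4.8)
The plan is to prove continuity of $g \circ f$ directly from the connectivity form of continuity given in Definition~\ref{continuous}, since that formulation chains through composition with essentially no case analysis. The only set-theoretic fact I need is that images compose, i.e.\ $(g \circ f)(A) = g(f(A))$ for every $A \subset X$.

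First I would fix an arbitrary $\kappa$-connected subset $A \subset X$; by Definition~\ref{continuous} it suffices to show that $(g \circ f)(A)$ is a $\mu$-connected subset of $Z$. Since $f$ is $(\kappa,\lambda)$-continuous and $A$ is $\kappa$-connected, the set $B := f(A) \subset Y$ is $\lambda$-connected. Next I apply the hypothesis on $g$: as $g$ is $(\lambda,\mu)$-continuous and $B$ is $\lambda$-connected, $g(B)$ is $\mu$-connected. Finally, rewriting $g(B) = g(f(A)) = (g \circ f)(A)$ shows that $(g \circ f)(A)$ is $\mu$-connected. Since $A$ was an arbitrary $\kappa$-connected set, $g \circ f$ is $(\kappa,\mu)$-continuous by Definition~\ref{continuous}.

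An alternative route uses the pointwise characterization stated just before the theorem: one assumes $x \adj_{\kappa} x'$ in $X$ and shows $(g\circ f)(x) \adjeq_{\mu} (g\circ f)(x')$. Continuity of $f$ yields $f(x) \adjeq_{\lambda} f(x')$, which splits into the case $f(x) = f(x')$, where the two image points coincide and adjacency-or-equality holds trivially, and the case $f(x) \adj_{\lambda} f(x')$, where continuity of $g$ finishes the argument. I regard this as the digital analogue of the elementary fact that a composite of continuous maps is continuous, so I expect no serious obstacle. The closest thing to a pitfall is the degenerate case in the pointwise approach, where $f$ collapses an adjacent pair to a single point and one must remember that $\adjeq$ permits equality; the connectivity formulation hides exactly this bookkeeping, which is why I would carry out the first argument as the main proof.
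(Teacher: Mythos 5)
Your proposal is correct, and it matches the standard argument: the paper states this theorem without proof as a quoted result from the reference \cite{Boxer99}, where continuity is exactly the preservation of connectedness, so the one-line chaining $(g \circ f)(A) = g(f(A))$ through the two hypotheses is precisely the intended proof. Your pointwise alternative (with its careful handling of the case $f(x) = f(x')$ via $\adjeq$) is also valid and requires nothing beyond the characterization stated just before the theorem.
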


Given $X = \Pi_{i=1}^v X_i$, we denote throughout this paper the projection
onto the $i^{th}$ factor by $p_i$; i.e., $p_i: X \to X_i$ is defined by
$p_i(x_1,\ldots,x_v) = x_i$, where $x_j \in X_j$.

\subsection{Digitally continuous multivalued functions}
A \emph{multivalued function} $f$ from $X$ to $Y$ assigns a subset of $Y$ to each point of $x$. We will  write $f:X \multimap Y$. For $A \subset X$ and a multivalued function $f:X\multimap Y$, let $f(A) = \bigcup_{x \in A} f(x)$. 

The papers~\cite{egs08, egs12} define continuity for multivalued functions between digital images based on subdivisions. (These papers make an error with respect to compositions, that is corrected in \cite{gs15}.) We have the following.
\begin{definition}
\rm{\cite{egs08, egs12}}
For any positive integer $r$, the \emph{$r$-th subdivision} of $\Z^n$ is
\[ \Z_r^n = \{ (z_1/r, \dots, z_n/r) \mid z_i \in \Z \}. \]
An adjacency relation $\kappa$ on $\Z^n$ naturally induces an adjacency relation (which we also call $\kappa$) on $\Z_r^n$ as follows: $(z_1/r, \dots, z_n/r) \adj_{\kappa} (z'_1/r, \dots, z'_n/r)$ in $\Z^n_r$ if and only if
$(z_1, \dots, z_n) \adj_{\kappa}(z_1', \dots, z_n')$ in $\Z^n$.

Given a digital image $(X,\kappa) \subset (\Z^n,\kappa)$, the \emph{$r$-th subdivision} of $X$ is 
\[ S(X,r) = \{ (x_1,\dots, x_n) \in \Z^n_r \mid (\lfloor x_1 \rfloor, \dots, \lfloor x_n \rfloor) \in X \}. \]

Let $E_r:S(X,r) \to X$ be the natural map sending $(x_1,\dots,x_n) \in S(X,r)$ to $(\lfloor x_1 \rfloor, \dots, \lfloor x_n \rfloor)$. 

For a digital image $(X,\kappa) \subset (\Z^n,\kappa)$, a function $f:S(X,r) \to Y$ \emph{induces a multivalued function $F:X\multimap Y$} as follows:
\[ F(x) = \bigcup_{x' \in E^{-1}_r(x)} \{f(x')\}. \]

A multivalued function $F:X\multimap Y$ is called $(\kappa,\lambda)$-\emph{continuous} when there is some $r$ such that $F$ is induced by some single-valued 
$(\kappa,\lambda)$-continuous function $f:S(X,r) \to Y$. $\Box$
\end{definition}

\begin{figure}[htb]
\begin{center}
\includegraphics[width=4.0in]{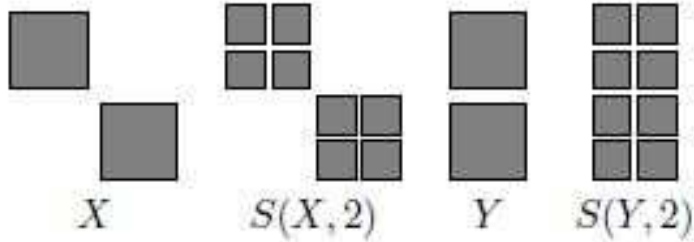}
\caption{Two images $X$ and $Y$ with their second subdivisions~\cite{BxSt16a}}
\label{subdivfig} 
\end{center}
\end{figure}


\begin{exl}
~\cite{BxSt16a}
An example of two digital images and their subdivisions is given in Figure \ref{subdivfig}.
Note that the subdivision construction (and thus the notion of continuity) depends on the particular embedding of $X$ as a subset of $\Z^n$. In particular we may have $X, Y \subset \Z^n$ with $X$ isomorphic to $Y$ but $S(X,r)$ not isomorphic to $S(Y,r)$. This is the case for the two images in Figure~\ref{subdivfig}, when we use 8-adjacency for all images: $X$ and $Y$ in the figure are isomorphic, each being a set of two adjacent points, but $S(X,2)$ and $S(Y,2)$ are not isomorphic since $S(X,2)$ can be disconnected by removing a single point, while this is impossible in $S(Y,2)$. $\Box$
\end{exl}

It is known~\cite{gs15} that a composition of 
digitally continuous multivalued functions need not
be continuous. However, we have the following.

\begin{thm}
{\rm \cite{gs15}}
\label{cv-composition}
Let $X \subset \Z^v$, $Y \subset \Z^n$, $Z \subset \Z^p$. Let
$F: X \multimap Y$ be a $(c_v,\kappa)$-continuous multivalued function and let
$G: Y \multimap Z$ be a $(\kappa, \lambda)$-continuous multivalued function.
Then $G \circ F: X \multimap Z$ is $(c_v, \lambda)$-continuous.
\end{thm}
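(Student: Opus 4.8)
The plan is to reduce the multivalued statement to the single-valued composition theorem (Theorem \ref{composition}) by building a single-valued continuous ``lift'' of $f$ through the subdivision of the target. First I would unwind the definitions: continuity of $F$ gives an integer $r$ and a $(c_v,\kappa)$-continuous single-valued $f\colon S(X,r)\to Y$ inducing $F$, and continuity of $G$ gives an integer $s$ and a $(\kappa,\lambda)$-continuous $g\colon S(Y,s)\to Z$ inducing $G$. Using $E_r\colon S(X,r)\to X$ and $E_s\colon S(Y,s)\to Y$, one computes directly that
\[ (G\circ F)(x)=\bigcup_{x'\in E_r^{-1}(x)}\;\bigcup_{w\in E_s^{-1}(f(x'))}\{g(w)\}=\bigl\{\,g(w)\ :\ w\in E_s^{-1}(F(x))\,\bigr\}. \]
The point to notice here is that the ``naive'' composite obtained by regarding $f$ as a map into the integer points $Y\subset S(Y,s)$ and forming $g\circ f$ is \emph{not} $G\circ F$: it evaluates $g$ only at the integer points $f(x')$, whereas $G\circ F$ evaluates $g$ on the entire fibers $E_s^{-1}(f(x'))$. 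This discrepancy is exactly the source of the composition error corrected in \cite{gs15}, and resolving it is the whole content of the theorem.

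Accordingly, the key step is to produce an integer $t$ and a single-valued $(c_v,\kappa)$-continuous map $\tilde f\colon S(X,t)\to S(Y,s)$ which ``thickens'' $f$ in the sense that, for every $x\in X$,
\[ \{\,\tilde f(x')\ :\ x'\in E_t^{-1}(x)\,\}=E_s^{-1}(F(x)). \]
Granting such a $\tilde f$, I would set $h=g\circ\tilde f\colon S(X,t)\to Z$. By Theorem \ref{composition} the single-valued composite $h$ is $(c_v,\lambda)$-continuous, and by the displayed computation $h(E_t^{-1}(x))=g\bigl(E_s^{-1}(F(x))\bigr)=(G\circ F)(x)$, so $h$ induces $G\circ F$; this is precisely what it means for $G\circ F$ to be $(c_v,\lambda)$-continuous, finishing the proof.

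Everything therefore rests on constructing $\tilde f$, and this is where I expect the main obstacle to lie and where the hypothesis that the domain adjacency is the \emph{maximal} adjacency $c_v$ is indispensable. Choosing $t=rM$ with $M$ a large multiple of $s$, there is a natural coarsening map $q\colon S(X,t)\to S(X,r)$ with $E_r\circ q=E_t$, easily checked to be $(c_v,c_v)$-continuous because passing to a coarser grid changes each scaled coordinate of $c_v$-adjacent points by at most one. I would then define $\tilde f$ cell by cell over the $q$-fibers, arranging on each coarse cell (on which $f\circ q$ is a constant value $y\in F(x)$) an offset assignment into $E_s^{-1}(y)$ that sweeps out the whole fiber in the interior of the cell. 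The difficulty is continuity across the boundary between two coarse cells on which $f$ takes \emph{different} (but, by continuity of $f$, $\kappa$-adjacent) values $y,y'$: a short computation shows that two points lying in $E_s^{-1}(y)$ and $E_s^{-1}(y')$ are $\kappa$-adjacent in $S(Y,s)$ only when their offsets sit on the shared facet of the two unit cubes. Hence the offset map must be ``pinned'' to that facet wherever $f$ jumps, while still being free to cover the entire fiber away from such jumps. Making these two requirements compatible is the crux: I would handle it by taking $M$ large enough to insert a thin boundary layer in each cell on which the offsets are forced to the appropriate facet values dictated by the neighbouring cells, reserving the cell interior for a $(c_v,\kappa)$-continuous surjection onto the fiber. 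The flexibility needed to interpolate between the pinned boundary values and the fiber-filling interior is exactly what the maximal adjacency $c_v$ supplies, and it is also the reason the statement can fail for weaker adjacencies on the domain.
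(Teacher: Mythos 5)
First, a point of comparison: the paper itself does not prove this theorem --- it is quoted from Giraldo and Sastre \cite{gs15} --- so there is no in-paper argument to measure your proposal against; it has to stand on its own (or against \cite{gs15}). Your reduction is correct, and it is in fact the strategy of the cited proof: the computation $(G\circ F)(x)=\{g(w) : w\in E_s^{-1}(F(x))\}$ is right; your diagnosis that the naive composite through the integer points $Y\subset S(Y,s)$ is neither continuous nor equal to $G\circ F$ is exactly the right way to see why the theorem has content; and granting a $(c_v,\kappa)$-continuous $\tilde f\colon S(X,t)\to S(Y,s)$ with $\tilde f(E_t^{-1}(x))=E_s^{-1}(F(x))$ for all $x\in X$, the composite $h=g\circ\tilde f$ is $(c_v,\lambda)$-continuous by Theorem~\ref{composition} and induces $G\circ F$, which finishes the argument.

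The genuine gap is that the existence of $\tilde f$ is essentially the entire content of the theorem, and your final paragraph sketches it rather than proves it. Two specific steps are missing. (i) The ``pinning'' constraints are imposed simultaneously by all of a cell's neighbours (up to $3^v-1$ of them) and they interact on lower-dimensional faces: at a corner of the cube $q^{-1}(x')$, the value of $\tilde f$ is constrained at once by several edge- and corner-neighbours, and you must prove this system of constraints is simultaneously satisfiable. This is where maximality of $c_v$ does the work --- it forces the $f$-values of all cells in a $2^v$-block to be pairwise $\kappa$-adjacent or equal --- but satisfiability is a nontrivial combinatorial lemma that you assert rather than establish; it also depends on $\kappa$ (for $\kappa=c_u$ with $u<n$, adjacency between points of neighbouring fibers limits how many offset coordinates may differ by $1$, which further restricts the boundary layer). (ii) You never construct the map on a single cell: a $(c_v,\kappa)$-continuous map from a $v$-dimensional cube of side $M$ onto the $n$-dimensional fiber $E_s^{-1}(y)$ of $s^n$ points, agreeing with prescribed values on the boundary layer. ``Taking $M$ large enough'' makes this plausible (say via a covering walk of the fiber), but note the domain and target dimensions $v$ and $n$ are unrelated, so this needs an actual argument. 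These two steps are precisely the technical core of \cite{gs15}; as written, your proposal is a correct reduction plus an accurate map of the difficulties --- a proof outline, not a proof.
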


Another way in which composition preserves continuity is the following.

\begin{prop}
\label{singleOfMultiple}
Let $F: X \multimap Y$ be a $(\kappa,\lambda)$-continuous multivalued function
and let $g: Y \to Z$ be a $(\lambda, \mu)$-continuous single-valued function.
Then $g \circ F: X \multimap Z$ is a $(\kappa, \mu)$-continuous multivalued function.
\end{prop}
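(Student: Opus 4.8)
The plan is to reduce the multivalued composition $g \circ F$ to a single-valued composition on a fixed subdivision, so that Theorem~\ref{composition} applies directly. Since $F$ is $(\kappa,\lambda)$-continuous, by definition there is some positive integer $r$ and a single-valued $(\kappa,\lambda)$-continuous function $f: S(X,r) \to Y$ that induces $F$, so that $F(x) = \bigcup_{x' \in E_r^{-1}(x)} \{f(x')\}$ for every $x \in X$. First I would form the single-valued composite $g \circ f: S(X,r) \to Z$. Since $f$ is $(\kappa,\lambda)$-continuous and $g$ is $(\lambda,\mu)$-continuous, Theorem~\ref{composition} gives that $g \circ f$ is $(\kappa,\mu)$-continuous.

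The crux is then to verify that this same function $g \circ f$, with the \emph{same} subdivision parameter $r$, induces precisely the multivalued function $g \circ F$. Let $H: X \multimap Z$ denote the multivalued function induced by $g \circ f$, so that $H(x) = \bigcup_{x' \in E_r^{-1}(x)} \{g(f(x'))\}$. On the other hand, by the definition of the multivalued composite, $(g \circ F)(x) = g(F(x)) = \bigcup_{y \in F(x)} \{g(y)\}$. Substituting the description of $F(x)$ and using that for a single-valued $g$ the image of a union is the union of the images, I would compute $g\bigl(\bigcup_{x' \in E_r^{-1}(x)} \{f(x')\}\bigr) = \bigcup_{x' \in E_r^{-1}(x)} \{g(f(x'))\}$, which is exactly $H(x)$. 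Hence $H = g \circ F$.

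Combining the two steps, $g \circ F$ is induced by the $(\kappa,\mu)$-continuous single-valued function $g \circ f$ defined on $S(X,r)$, so by definition $g \circ F$ is a $(\kappa,\mu)$-continuous multivalued function, as desired.

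I expect the only point requiring care --- and the reason this proposition holds while the general composition of two multivalued functions (Theorem~\ref{cv-composition}) needs extra hypotheses --- to be the observation that composing on the right with a single-valued map requires no change of the subdivision parameter. Because $g$ introduces no new multivaluedness and no second subdivision of $Y$, the original witness $r$ for $F$ serves unchanged as a witness for $g \circ F$, and the entire argument collapses to the elementary set-theoretic identity that a single-valued function commutes with unions of singletons.
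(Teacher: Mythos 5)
Your proposal is correct and follows essentially the same route as the paper's own proof: the paper also takes the inducing single-valued function $f: S(X,r) \to Y$, forms $g \circ f: S(X,r) \to Z$, and observes that it is $(\kappa,\mu)$-continuous and induces $g \circ F$. Your write-up simply makes explicit the two steps the paper leaves implicit (the appeal to Theorem~\ref{composition} and the union-of-singletons computation showing $g \circ f$ induces $g \circ F$).
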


\begin{proof}
Let $f: S(X,r) \to Y$ be a $(\kappa,\lambda)$-continuous function that
induces $F$. Then $g \circ f: S(X,r) \to Z$ is a $(\kappa, \mu)$-continuous
single-valued function that induces $g \circ F$.
\end{proof}

\subsection{Approximate fixed points}
\label{approxPrelim}
Let $f \in C(X,\kappa)$, let $F: X \multimap X$ be a $\kappa$-continuous multivalued function,
and let $x \in X$. We say
\begin{itemize}
    \item $x$ is a {\em fixed point} of $f$ if $f(x)=x$; 
          $x$ is a {\em fixed point} of $F$ if $x \in F(x)$.
    \item If $f(x) \adjeq_{\kappa} x$, then
          $x$ is an {\em almost fixed point}~\cite{Rosenfeld,Tsaur} or
          {\em approximate fixed point}~\cite{BEKLL} of 
          $(f,\kappa)$.
    \item If there exists $x' \in F(x)$ such that $x \adjeq_{\kappa} x'$, then $x$
          is an {\em approximate fixed point}~\cite{CinKar} of $(F,\kappa)$.
    \item A digital image $(X,\kappa)$ has the
          {\em approximate fixed point property with respect to continuous single-valued
          functions ($AFPP_S$)}~\cite{BEKLL} if for every $g \in C(X,\kappa)$
          there is an approximate fixed point of $g$.
    \item A digital image $(X,\kappa)$ has the
          {\em approximate fixed point property with respect to continuous multivalued
          functions ($AFPP_M$)}~\cite{CinKar} if for every 
          $(\kappa,\kappa)$-continuous multivalued
          function $G: X \multimap X$ there is an approximate fixed point of $G$.
\end{itemize}

\begin{thm}
{\rm \cite{BEKLL}}
\label{BEKLLisoThm}
Let $X$ and $Y$ be digital images such that $(X,\kappa)$ and
$(Y,\lambda)$ are isomorphic. If $(X,\kappa)$ has the $AFPP_S$,
then $(Y,\lambda)$ has the  $AFPP_S$.
\end{thm}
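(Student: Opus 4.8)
The plan is to conjugate an arbitrary continuous self-map of $Y$ back to $X$, invoke the $AFPP_S$ of $X$ there, and then transport the resulting approximate fixed point forward to $Y$. Write $\phi: (X,\kappa) \to (Y,\lambda)$ for the given isomorphism; by definition $\phi$ is a bijection satisfying $x \adj_{\kappa} x'$ if and only if $\phi(x) \adj_{\lambda} \phi(x')$, so both $\phi$ and $\phi^{-1}$ are continuous, and each preserves and reflects the relation $\adjeq$ (the ``or equal'' alternative being handled by bijectivity).

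First I would fix an arbitrary $g \in C(Y,\lambda)$ and form the conjugate $f = \phi^{-1} \circ g \circ \phi : X \to X$. By Theorem~\ref{composition}, $f \in C(X,\kappa)$, since it is a composition of a $(\kappa,\lambda)$-continuous, a $(\lambda,\lambda)$-continuous, and a $(\lambda,\kappa)$-continuous function. Because $(X,\kappa)$ has the $AFPP_S$, there is an approximate fixed point $x_0$ of $f$, i.e.\ $f(x_0) \adjeq_{\kappa} x_0$.

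Next I would set $y_0 = \phi(x_0)$ and unwind the definition of $f$: the relation $f(x_0) \adjeq_{\kappa} x_0$ reads $\phi^{-1}(g(y_0)) \adjeq_{\kappa} \phi^{-1}(y_0)$. Applying $\phi$ and using that it preserves $\adjeq$ then yields $g(y_0) \adjeq_{\lambda} y_0$, so $y_0$ is an approximate fixed point of $g$. Since $g$ was arbitrary, $(Y,\lambda)$ has the $AFPP_S$.

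I do not expect a genuine obstacle here: the argument is a routine transport of structure along the isomorphism. The only point demanding a moment's care is that $\phi$ must transport the reflexive-or-adjacent relation $\adjeq$, not merely $\adj$ — but this is immediate, since $\phi$ is a bijection (so $\phi(a)=\phi(b)$ exactly when $a=b$), while the adjacency alternative is precisely the defining property of the isomorphism.
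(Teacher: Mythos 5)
Your proof is correct and is essentially the standard argument for this result (which the paper itself only cites from \cite{BEKLL} without reproducing a proof): conjugating an arbitrary $g \in C(Y,\lambda)$ by the isomorphism, applying the $AFPP_S$ of $(X,\kappa)$, and transporting the approximate fixed point back via the fact that a bijective isomorphism preserves the relation $\adjeq$. No gaps; the appeal to Theorem~\ref{composition} and the bijectivity argument for the ``or equal'' case are exactly the points that need to be made.
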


\begin{thm}
{\rm \cite{BEKLL}}
\label{BEKLLretractionThm}
Let $X$ and $Y$ be digital images such that 
$Y$ is a $\kappa$-retract of $X$. If $(X,\kappa)$ has the $AFPP_S$, then $(Y,\kappa)$ has the $AFPP_S$.
\end{thm}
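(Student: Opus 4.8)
The plan is to lift an arbitrary continuous self-map of $Y$ to a continuous self-map of $X$, apply the $AFPP_S$ of $X$, and then push the resulting approximate fixed point back down to $Y$ via the retraction. Recall that $Y$ being a $\kappa$-retract of $X$ means there is a $(\kappa,\kappa)$-continuous retraction $r: X \to Y$ with $r(y)=y$ for all $y \in Y$; the inclusion $i: Y \to X$ is visibly continuous, since $Y$ carries the adjacency inherited from $X$. Given any $g \in C(Y,\kappa)$, I would first form the composite $G = i \circ g \circ r : X \to X$. By two applications of Theorem~\ref{composition}, $G \in C(X,\kappa)$.

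Since $(X,\kappa)$ has the $AFPP_S$, there is a point $x_0 \in X$ with $G(x_0) \adjeq_{\kappa} x_0$. I would set $y_0 = r(x_0) \in Y$. Because $r$ fixes $Y$ pointwise and $g(y_0) \in Y$, one computes $G(x_0) = i(g(r(x_0))) = g(y_0)$, so the approximate-fixed-point relation reads $g(y_0) \adjeq_{\kappa} x_0$.

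The final step is to apply $r$ to this relation. By the adjacency characterization of continuity, a continuous map sends adjacent points to adjacent-or-equal points, and it trivially preserves equality; hence $a \adjeq_{\kappa} b$ implies $r(a) \adjeq_{\kappa} r(b)$. Applying this to $g(y_0) \adjeq_{\kappa} x_0$ yields $r(g(y_0)) \adjeq_{\kappa} r(x_0) = y_0$. Since $g(y_0) \in Y$ and $r$ restricts to the identity on $Y$, we have $r(g(y_0)) = g(y_0)$, and therefore $g(y_0) \adjeq_{\kappa} y_0$. Thus $y_0$ is an approximate fixed point of $g$ in $Y$, which establishes that $(Y,\kappa)$ has the $AFPP_S$.

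I expect the only delicate point to be this last manipulation: applying the retraction to the near-miss relation $g(y_0) \adjeq_{\kappa} x_0$ must simultaneously preserve adjacency (via continuity of $r$) and collapse the $Y$-valued point $g(y_0)$ to itself (via the retraction property $r|_Y = \id$). Everything else is bookkeeping with the composition theorem and the fact that the same adjacency $\kappa$ is used on $X$ and on its subimage $Y$.
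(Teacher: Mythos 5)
Your proof is correct, and it is the standard argument: the paper itself states this theorem only as a citation to \cite{BEKLL} without reproducing a proof, and the proof in that reference proceeds exactly as you do --- form the continuous composite $i \circ g \circ r : X \to X$, obtain an approximate fixed point $x_0$, and push it down to $Y$ by applying the continuous retraction $r$, using that $r$ fixes the point $g(r(x_0)) \in Y$. Your handling of the case split hidden in $\adjeq_{\kappa}$ (equality versus genuine adjacency) is also sound, since continuity of $r$ preserves adjacency-or-equality.
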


The latter inspired the following.

\begin{thm}
{\rm \cite{CinKar}}
\label{CinKarRetractionThm}
Let $X \subset \Z^v$ and let $Y$ be a $c_v$-multivalued retract of $X$. 
If $(X,c_v)$ has the $AFPP_M$, then
$(Y,c_v)$ has the $AFPP_M$.
\end{thm}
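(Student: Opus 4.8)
The plan is to imitate the proof of the single-valued retraction theorem (Theorem~\ref{BEKLLretractionThm}): transport an arbitrary continuous multivalued self-map of $Y$ up to a continuous multivalued self-map of $X$, harvest an approximate fixed point there using the $AFPP_M$ of $X$, and then push it back down to $Y$ through the retraction. Concretely, let $R: X \multimap Y$ be the continuous multivalued retraction witnessing that $Y$ is a $c_v$-multivalued retract of $X$ (so $R(y)=\{y\}$ for every $y \in Y$), and let $i: Y \to X$ be the inclusion, which is single-valued and $(c_v,c_v)$-continuous. Given an arbitrary $(c_v,c_v)$-continuous $G: Y \multimap Y$, I would set $H = i \circ G \circ R : X \multimap X$.

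First I would verify that $H$ is $c_v$-continuous. Since $X \subset \Z^v$ and $R$ is $c_v$-continuous, Theorem~\ref{cv-composition} gives that $G \circ R : X \multimap Y$ is $(c_v,c_v)$-continuous; then Proposition~\ref{singleOfMultiple}, applied with the single-valued continuous map $i$, shows $H = i \circ (G \circ R)$ is $(c_v,c_v)$-continuous. Because $(X,c_v)$ has the $AFPP_M$, $H$ therefore has an approximate fixed point: there is $x \in X$ and a witness $x^* \in H(x)$ with $x \adjeq_{c_v} x^*$. Unwinding the composition, $x^* \in G(R(x)) = \bigcup_{y \in R(x)} G(y)$, so there is some $y_0 \in R(x)$ with $x^* \in G(y_0)$; and since $G$ maps into $Y$, the witness satisfies $x^* \in Y$.

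It remains to manufacture an approximate fixed point of $G$ inside $Y$, and this pullback step is where I expect the real difficulty to lie. The natural candidate is $y_0$ together with the witness $x^* \in G(y_0)$, which succeeds precisely when $y_0 \adjeq_{c_v} x^*$. To obtain such an adjacency I would feed $x \adjeq_{c_v} x^*$ into the retraction: since $x^* \in Y$ we have $R(x^*) = \{x^*\}$, and the subdivision definition of continuity guarantees that $R$ sends adjacent points to adjacent sets, so $R(x) \adjeq_{c_v} R(x^*) = \{x^*\}$; that is, \emph{some} point $a \in R(x)$ satisfies $a \adjeq_{c_v} x^*$. The obstacle is that this set-level adjacency produces only \emph{a} point of $R(x)$ adjacent to $x^*$, with no guarantee that it is the particular preimage-point $y_0$ whose image $G(y_0)$ contains $x^*$; the $AFPP_M$ witness $x^*$ and the adjacency witness $a$ need not coincide. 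Matching them is the crux: it would follow at once if the retraction enjoyed the stronger property that \emph{every} point of $R(x)$ is adjacent-or-equal to a point of $R(x^*)=\{x^*\}$ (forcing $y_0 \adjeq_{c_v} x^*$), but this does not follow from continuity alone. I would therefore concentrate my effort here — either by strengthening the retraction hypothesis, or by a direct analysis of the inducing single-valued map $\rho: S(X,r) \to Y$ of $R$ on the subdivision cells lying over $x$ and $x^*$ — and I would regard the soundness of this matching step as the point most in need of scrutiny.
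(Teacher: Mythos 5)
Your proposal follows exactly the strategy of~\cite{CinKar}, and the most important thing for you to know is that this paper does \emph{not} prove Theorem~\ref{CinKarRetractionThm}: the statement is merely quoted from~\cite{CinKar} in the preliminaries, and Section~\ref{retractSec} of the paper is devoted to showing that the argument of~\cite{CinKar} --- lift a continuous multivalued self-map of $Y$ to $I \circ F \circ R: X \multimap X$, harvest an approximate fixed point from the $AFPP_M$ of $X$, and push it back down through the retraction --- breaks at precisely the step you flag. The paper concludes that ``the assertion must be regarded as unproven'' and restates it as an open question in its final section. So the matching step you single out for scrutiny is not a technicality awaiting a clever patch: as far as the paper (and the literature it surveys) knows, it cannot be closed, and no alternative proof is known.

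On the details, your diagnosis coincides with the paper's published critique. The portion of your argument that is sound --- continuity of $H = i \circ G \circ R$ via Theorem~\ref{cv-composition} and Proposition~\ref{singleOfMultiple}, and the extraction of $x \adjeq_{c_v} x^*$ with $x^* \in G(y_0)$, $y_0 \in R(x)$, $x^* \in Y$ --- corresponds to the part of~\cite{CinKar}'s argument that the paper accepts. The fatal obstacle is the one you name: continuity of a multivalued function yields only \emph{set-level} adjacency, so $R(x) \adjeq_{c_v} R(x^*) = \{x^*\}$ produces some $a \in R(x)$ adjacent or equal to $x^*$, but nothing forces $a = y_0$, nor $x^* \in G(a)$. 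In~\cite{CinKar}'s version of the same maneuver the unjustified step is concluding $x_1 \adjeq_{c_v} F(x_1)$ from the set adjacency $G(x_0) \adjeq_{c_v} G(x_1)$, which the paper rejects because ``we can not assume that $G(x_0)$ and $G(x_1)$ are singletons'' --- the same species of error, witnesses that cannot be matched. Your suggested repair (requiring the retraction to have strong continuity, so that \emph{every} point of $R(x)$ is adjacent or equal to a point of $R(x^*)$) would indeed force $y_0 \adjeq_{c_v} x^*$, but it proves a different theorem with a strengthened hypothesis, not the statement as given. In short: your proposal is not a proof, but your identification of where and why it fails is exactly right, and the statement itself should be treated as open rather than as a result you failed to reprove.
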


\begin{prop}
\label{multiImpliesSingle}
If $(X,\kappa)$ has the $AFPP_M$, then $(X,\kappa)$
has the $AFPP_S$.
\end{prop}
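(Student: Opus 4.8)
The plan is to realize each continuous single-valued function as a trivial continuous multivalued function and to observe that the two notions of approximate fixed point coincide under this realization. Given $g \in C(X,\kappa)$, I would define the multivalued function $G: X \multimap X$ by $G(x) = \{g(x)\}$ and show that $G$ is $(\kappa,\kappa)$-continuous, so that the hypothesis $AFPP_M$ applies to it.

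The key step is verifying that $G$ is continuous in the subdivision sense. Here I would use the first subdivision, $r = 1$. Since $\Z^n_1 = \Z^n$ and every coordinate of a point of $X$ is already an integer, we have $S(X,1) = X$ and the natural map $E_1: S(X,1) \to X$ is the identity, so that $E_1^{-1}(x) = \{x\}$ for each $x \in X$. Taking $f = g: S(X,1) \to X$, which is $(\kappa,\kappa)$-continuous by hypothesis, the induced multivalued function is $\bigcup_{x' \in E_1^{-1}(x)} \{f(x')\} = \{g(x)\} = G(x)$. Hence $G$ is induced by a continuous single-valued function and is therefore $(\kappa,\kappa)$-continuous.

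With continuity of $G$ established, I would invoke $AFPP_M$ to obtain an approximate fixed point $x$ of $G$: there is some $x' \in G(x)$ with $x \adjeq_{\kappa} x'$. Because $G(x)$ is the singleton $\{g(x)\}$, necessarily $x' = g(x)$, and so $g(x) \adjeq_{\kappa} x$. This is exactly the statement that $x$ is an approximate fixed point of $g$. Since $g$ was an arbitrary element of $C(X,\kappa)$, this would show that $(X,\kappa)$ has the $AFPP_S$.

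I do not expect a serious obstacle here; the content of the argument is the bookkeeping identification $S(X,1) = X$ with $E_1 = \id$, which guarantees that passing from a single-valued $g$ to its singleton-valued counterpart preserves continuity. The only point requiring care is to confirm that the multivalued approximate-fixed-point condition (some $x' \in G(x)$ with $x \adjeq_{\kappa} x'$) reduces, for singleton-valued $G$, to the single-valued condition $g(x) \adjeq_{\kappa} x$, which is immediate once $G(x) = \{g(x)\}$.
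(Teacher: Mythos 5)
Your proof is correct and takes essentially the same approach as the paper: the paper's entire proof is the observation that a continuous single-valued function is a continuous multivalued function, which your $r=1$ subdivision argument (with $S(X,1)=X$ and $E_1=\id$) simply makes explicit.
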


\begin{proof}
The assertion follows from the observation that a continuous 
single-valued function between digital images is a 
continuous multivalued function.
\end{proof}

\section{Results for digital cubes}
In this section, we consider approximate fixed point
properties for digital cubes.
We start with the special case of dimension~1.

Theorem~3.3 of~\cite{Rosenfeld}, which proves that a digital
interval has the $AFPP_S$, is extended as follows.

\begin{thm}
\label{intervalAFPP}
The digital image $([a,b]_{\Z}, c_1)$ has the $AFPP_M$.
\end{thm}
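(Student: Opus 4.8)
The plan is to reduce the multivalued statement to a discrete intermediate-value argument on the subdivision that induces $G$. By the definition of continuity for multivalued functions, there is an integer $r \geq 1$ and a $(c_1,c_1)$-continuous single-valued $f \colon S([a,b]_{\Z},r) \to [a,b]_{\Z}$ inducing $G$. First I would describe the domain explicitly: $S([a,b]_{\Z},r)$ is the path $t_0,t_1,\dots,t_N$ with $t_k = a + k/r$ and $N = (b-a+1)r - 1$, in which $t_k$ and $t_{k+1}$ are $c_1$-adjacent. Continuity of $f$ then means $|f(t_{k+1}) - f(t_k)| \leq 1$ for every $k$, while the evaluation map satisfies $E_r(t_k) = a + \lfloor k/r \rfloor$, with fiber $E_r^{-1}(a+m) = \{t_{mr},\dots,t_{mr+r-1}\}$, so that $G(a+m) = \{\, f(t_k) \mid mr \leq k \leq mr+r-1 \,\}$.

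The key reduction is the observation that an approximate fixed point of $G$ is produced by any index $k$ at which $f(t_k)$ is close to the base point $E_r(t_k)$. Precisely, define the integer-valued function $h(k) = f(t_k) - E_r(t_k)$ on $\{0,\dots,N\}$; if $|h(k)| \leq 1$ for some $k$, then setting $x = E_r(t_k)$ we have $x' = f(t_k) \in G(x)$ with $|x - x'| = |h(k)| \leq 1$, so $x$ is an approximate fixed point of $(G,c_1)$. Thus it suffices to show that $h$ takes a value in $\{-1,0,1\}$ somewhere.

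To prove this I would combine a discrete boundary condition with a one-sided step bound. Since $f(t_0) \geq a$ and $E_r(t_0) = a$, we get $h(0) \geq 0$; since $\lfloor N/r \rfloor = b-a$ and $f(t_N) \leq b$, we get $h(N) \leq 0$. For the step bound, $h(k+1) - h(k) = \bigl(f(t_{k+1}) - f(t_k)\bigr) - \bigl(\lfloor (k+1)/r \rfloor - \lfloor k/r \rfloor\bigr)$, where the first difference is at least $-1$ and the second is at most $1$, so $h(k+1) - h(k) \geq -2$. Assuming for contradiction that $|h(k)| \geq 2$ for all $k$, the boundary conditions force $h(0) \geq 2$ and $h(N) \leq -2$; letting $k^{\ast}$ be the least index with $h(k^{\ast}) \leq -2$ gives $h(k^{\ast}-1) \geq 2$, whence $h(k^{\ast}) - h(k^{\ast}-1) \leq -4$, contradicting the step bound. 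Hence some $k$ satisfies $|h(k)| \leq 1$.

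The main obstacle, and the point where the multivalued case genuinely differs from Rosenfeld's single-valued interval argument, is that $h$ need not be monotone: the floor term can make $h$ increase as well as decrease, so one cannot simply follow a single sign change. What rescues the argument is that only the \emph{one-sided} inequality $h(k+1) - h(k) \geq -2$ is needed, since this already prevents $h$ from jumping from the region $h \geq 2$ directly into $h \leq -2$, which is exactly what the ``first crossing'' argument requires. I would also note that, since the inducing map is used only through its $c_1$-continuity, no extra hypothesis is needed, and that the theorem strengthens the single-valued interval result via Proposition~\ref{multiImpliesSingle}.
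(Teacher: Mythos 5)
Your proof is correct and takes essentially the same route as the paper's: both pass to the inducing single-valued map $f$ on the subdivision and run a discrete intermediate-value argument on the displacement of $f(t)$ from the integer part of $t$, using endpoint sign conditions and a bound on how much that displacement can change per $1/r$-step. Your version is in fact slightly cleaner than the paper's, since measuring the displacement against $E_r$ (the floor) rather than the paper's $\lceil t \rceil$ makes the extraction of an approximate fixed point of $G$ from $|h(k)|\le 1$ immediate, and the weak boundary inequalities $h(0)\ge 0$, $h(N)\le 0$ let you dispense with the paper's preliminary case split on whether $a\in F(a)$ or $b\in F(b)$.
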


\begin{proof}
We modify the proof given for Theorem~3.3
of~\cite{Rosenfeld}. 
Let $F: [a,b]_{\Z} \multimap [a,b]_{\Z}$ be
$c_1$-continuous.
If $a \in F(a)$ or $b \in F(b)$, we are done. 
Otherwise, 
let $f: (S(X,r),c_1) \to ([a,b]_{\Z},c_1)$ generate
$F$. Then $f(a)>a$ and $f(b)<b$,  so 
\begin{equation}
\label{g-vals}
    g(t) = f(t)-\lceil t \rceil \mbox{ is positive at $t=a$ and negative at $t=b$.}
\end{equation}
 
Since $f$ is continuous,
we must have $f(t - 1/r) \in \{f(t)-1, f(t), f(t)+1\}$ for all
$t \in S(X,r) \setminus \{a\}$. Let $z \in \Z$.
Since $\lceil t \rceil$ is constant for $z \le t < z+1$, 
and changes by 1 as $t$ increases from $z+(r-1)/r$ to $z+1$,
it follows that for $a \le t <b$, 
an increase of $1/r$ in the value of $t$ causes 
the expression $|g(t)|$
to change by at most 1 for $z \le t < z+1$,
and by at most $2$ for $t= z+(r-1)/r$.
It follows from~(\ref{g-vals}) that there exists $c$ such that
$g(c) \in \{-1,0,1\}$, i.e., $c$ is an approximate fixed point for $F$.
\end{proof}

A. Rosenfeld's paper~\cite{Rosenfeld} states the following as its Theorem~4.1 (quoted verbatim).
\begin{quote}
    Let $I$ be a digital picture, and let $f$ be a continuous function from $I$
    into $I$; then there exists a point $P \in I$ such that $f(P)=P$ or is a neighbor
    or diagonal neighbor of $P$.
\end{quote}
Several subsequent papers have incorrectly
concluded that this result implies that $I$ with
some $c_u$ adjacency has the $AFPP_S$. 
By {\em digital picture} Rosenfeld means a digital cube, $I= [0,n]_{\Z}^v$.
By a ``continuous function" he means a $(c_1,c_1)$-continuous function;
by ``a neighbor or diagonal neighbor of $P$" he means a $c_v$-adjacent point. Thus,
if we generalize our definition of the $AFPP_S$ as 
Definition~\ref{AFPP-general} below, what
Rosenfeld's theorem shows in terms of an AFPP is stated as Theorem~\ref{Rosenfeld'sAFPP} below.

\begin{definition}
\label{AFPP-general} Let $\kappa,\lambda, \mu$
be adjacencies for a digital image $X$.
Then $X$ has the {\em approximate fixed point property for
single-valued functions and}
$(\kappa,\lambda,\mu)$, denoted $AFPP_S(\kappa,\lambda,\mu)$, 
if for every $(\kappa,\lambda)$-continuous single-valued function
$f: X \to X$ there exists $x \in X$ such that $x \adjeq_{\mu} f(x)$.
$X$ has the {\em approximate fixed point property for
multivalued functions and}
$(\kappa,\lambda,\mu)$, denoted $AFPP_M(\kappa,\lambda,\mu)$, 
if for every $(\kappa,\lambda)$-continuous multivalued function
$F: X \multimap X$ there exist $x \in X$ and $y \in F(x)$
such that $x \adjeq_{\mu} y$.
\end{definition}

Thus, $(X,\kappa)$ has the $AFPP_S$ if and only if $X$ has the $AFPP_S(\kappa,\kappa,\kappa)$. 

\begin{thm}
\label{Rosenfeld'sAFPP}
{\rm \cite{Rosenfeld}}
Let $X = [0,n]_{\Z}^v \subset \Z^v$
for some $v \in \N$. Then $X$ has the
$AFPP_S(c_1,c_1,c_v)$.
\end{thm}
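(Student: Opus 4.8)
The plan is to recast the statement as a discrete analog of the Poincaré--Miranda theorem (equivalently, of Brouwer's theorem for the cube). For $f \colon X \to X$ with $X = [0,n]_{\Z}^v$ and each $i \in \{1,\dots,v\}$, define the displacement $g_i(x) = p_i(f(x)) - p_i(x)$. Since $x \adjeq_{c_v} f(x)$ means exactly that $x$ and $f(x)$ agree up to $\pm 1$ in every coordinate, the desired conclusion is precisely that $|g_i(x)| \le 1$ holds for all $i$ \emph{simultaneously}, i.e.\ that $f(x)$ lies in the unit box centered at $x$. First I would record the boundary sign conditions forced by the cube: on the face $p_i = 0$ we have $g_i = p_i \circ f \ge 0$, and on the face $p_i = n$ we have $g_i = p_i \circ f - n \le 0$. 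These are exactly the hypotheses of a Poincaré--Miranda-type argument, with the ``bottom'' and ``top'' $i$-faces carrying opposite signs of the $i$-th displacement.

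The next step is the quantitative input supplied by continuity. If $x,x'$ are $c_1$-adjacent, differing only in coordinate $j$, then $(c_1,c_1)$-continuity forces $f(x)$ and $f(x')$ to be equal or $c_1$-adjacent, hence to differ by at most $1$ in a single coordinate. A short computation then gives $|g_i(x') - g_i(x)| \le 2$ when $j = i$ and $|g_i(x') - g_i(x)| \le 1$ when $j \ne i$. This $2$-Lipschitz bound along the active coordinate is what makes the tolerance $1$ (rather than $0$) the correct and unavoidable slack, and is the reason the conclusion is stated with $c_v$-adjacency. The base case $v = 1$ follows directly: $g_1$ never increases and decreases by at most $2$ per unit step, with $g_1(0) \ge 0$ and $g_1(n) \le 0$; since a step of size at most $2$ cannot carry $g_1$ from a value $\ge 2$ to a value $\le -2$, it must attain a value in $\{-1,0,1\}$, as in the argument for Theorem~\ref{intervalAFPP}.

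For the combinatorial core I would argue by contradiction. Suppose no $x$ satisfies $\max_i |g_i(x)| \le 1$; then every $x$ admits a signed coordinate label $\ell(x) = \pm i$ recording that $g_i(x) \ge 2$ (sign $+$) or $g_i(x) \le -2$ (sign $-$). The boundary conditions above make this labeling admissible for a cubical Sperner / Poincaré--Miranda combinatorial lemma: the label $-i$ is excluded on the face $p_i = 0$ and the label $+i$ on the face $p_i = n$. Such a lemma produces a small cell on which all $2v$ signed directions are witnessed; the Lipschitz bound of the previous paragraph then shows that the displacements cannot vary fast enough across so small a cell to realize both $g_i \ge 2$ and $g_i \le -2$ without some vertex having $|g_i| \le 1$ in every coordinate, giving the contradiction.

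The main obstacle is precisely this last step: establishing the discrete Poincaré--Miranda lemma with the correct slack, i.e.\ verifying that the $2$-Lipschitz control on the $g_i$ matches the labeling contradiction so that tolerance $1$ comes out. I expect the cleanest route is either to cite a known discrete Poincaré--Miranda theorem and feed in the sign and Lipschitz data, or to run an induction on $v$ in which a lower-dimensional near-fixed point is promoted via a sign-change argument in the $v$-th coordinate as in the base case; the bookkeeping that keeps the $\ell_\infty$-tolerance at $1$ through the dimension reduction is where the real care is needed.
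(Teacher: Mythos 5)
You should first be aware that the paper itself does not prove Theorem~\ref{Rosenfeld'sAFPP}: it is Rosenfeld's Theorem~4.1, cited from \cite{Rosenfeld}, and the paper's only contribution here is the dictionary needed to restate it in the language of Definition~\ref{AFPP-general} (``digital picture'' $=[0,n]_{\Z}^v$, ``continuous'' $=(c_1,c_1)$-continuous, ``neighbor or diagonal neighbor'' $=c_v$-adjacent). Your proposal is therefore an attempt at an independent proof, and it must be judged on whether it closes.

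It does not close, and the hole is not the benign ``bookkeeping'' you describe at the end; the step you defer is one that \emph{fails} in dimension $v\ge 3$. Your preliminary work is correct: the displacements $g_i=p_i\circ f-p_i$, the boundary signs, the Lipschitz estimates ($g_i$ changes by at most $2$ across a $c_1$-step in direction $i$ and by at most $1$ across a step in any other direction), and the case $v=1$. Now grant yourself the strongest conclusion any discrete Poincar\'e--Miranda or cubical Sperner lemma could plausibly supply --- indeed stronger than the standard ones, which produce a cell containing the labels $+i$ and $-i$ for a \emph{single} $i$, or a $c_v$-connected monochromatic chain between opposite faces, not ``all $2v$ signed directions in one cell'': a unit cell with vertices $x^+$, $x^-$ satisfying $g_i(x^+)\ge 2$ and $g_i(x^-)\le -2$. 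Since $x^+$ and $x^-$ lie in a common unit cell, they are joined inside the cell by a $c_1$-path of length at most $v$ with at most one step in each coordinate direction, so your own estimates give
\[ g_i(x^+)-g_i(x^-)\le 2+(v-1)=v+1, \]
whereas the labels force $g_i(x^+)-g_i(x^-)\ge 4$. This is a contradiction precisely when $v+1<4$, i.e.\ only for $v\le 2$. For $v\ge 3$ nothing is contradicted: along the path some intermediate vertex $y$ must indeed have $|g_i(y)|\le 1$, but $y$ carries its own label $\pm j$ with $j\ne i$, and no vertex is forced to satisfy $|g_j(y)|\le 1$ for \emph{all} $j$ simultaneously, which is what $y\adjeq_{c_v}f(y)$ requires. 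Raising the labelling threshold from $2$ to $T$ restores the contradiction only when $2T>v+1$, and then the conclusion degrades to a point with $\max_j|g_j|\le T-1\approx v/2$, i.e.\ an approximate fixed point only in the weak $c_v^k$ sense (compare Theorem~\ref{digitalPicProp}), not the $c_v$ sense asserted. So the dimension-independent slack of $1$ --- which is exactly the nontrivial content of Rosenfeld's theorem --- cannot be extracted from the cell-local (or chain-local) Lipschitz contradiction you outline; completing the proof needs a genuinely different mechanism, such as Rosenfeld's own argument, rather than the off-the-shelf discrete Poincar\'e--Miranda lemma you hope to cite.
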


Since Rosenfeld's proof can be easily modified to any digital
cube $\Pi_{i=1}^v [a_i,b_i]_{\Z}$, and since
for $1 \le u \le v$, a $(c_u,c_1)$-continuous $f: X \to X$ is
$(c_1,c_1)$-continuous~\cite{BxAlt}, we have the following.

\begin{cor}
\label{cube} Let $n$ and $v$ be positive integers.
Let $X = \Pi_{i=1}^v [a_i,b_i]_{\Z} \subset \Z^v$.
Let $u \in [1,v]_{\Z}$.
Then $X$ has the $AFPP_S(c_u,c_1,c_v)$.
\end{cor}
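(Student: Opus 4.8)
The plan is to split the claim into two independent pieces: a purely formal reduction of the exponent $u$ to the value $1$, and the box analogue of Rosenfeld's Theorem~\ref{Rosenfeld'sAFPP}, namely that $X=\Pi_{i=1}^v[a_i,b_i]_{\Z}$ has the $AFPP_S(c_1,c_1,c_v)$.

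First I would dispense with the exponent $u$. If $x\adj_{c_1}x'$ then exactly one coordinate differs, and by $1$, so the defining condition for $c_1$-adjacency is a special case of that for $c_u$-adjacency as soon as $u\ge 1$; hence $x\adj_{c_u}x'$. Consequently any $(c_u,c_1)$-continuous $f:X\to X$, which controls the images of all $c_u$-adjacent pairs, in particular controls the images of all $c_1$-adjacent pairs, and so is $(c_1,c_1)$-continuous. Thus the $(c_u,c_1)$-continuous self-maps of $X$ form a subfamily of the $(c_1,c_1)$-continuous self-maps, and a property that guarantees a $c_v$-approximate fixed point for every member of the larger family \emph{a fortiori} guarantees one for every member of the smaller family. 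In symbols, $AFPP_S(c_1,c_1,c_v)$ implies $AFPP_S(c_u,c_1,c_v)$.

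It remains to prove the box version of Rosenfeld's theorem. One route is to revisit Rosenfeld's argument for $[0,n]_{\Z}^v$: the only place the domain enters is the coordinatewise discrete intermediate-value step, and nothing there needs the sides to begin at $0$ or to have equal length, so the argument transcribes to an arbitrary box. A cleaner route is to reduce to Theorem~\ref{Rosenfeld'sAFPP} directly. After translating so that $a_i=0$, put $N=\max_i b_i$ and regard $X=\Pi_{i=1}^v[0,b_i]_{\Z}$ as a subset of the genuine cube $Q=[0,N]_{\Z}^v$. The coordinatewise clamp $r(x_1,\dots,x_v)=(\min(x_1,b_1),\dots,\min(x_v,b_v))$ retracts $Q$ onto $X$; since clamping is $1$-Lipschitz in each coordinate it sends $c_1$-adjacent points to $c_1$-adjacent-or-equal points and likewise for $c_v$, so $r$ is both $(c_1,c_1)$- and $(c_v,c_v)$-continuous. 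Given a $(c_1,c_1)$-continuous $g:X\to X$, the composite $\hat g=\iota\circ g\circ r:Q\to Q$ (with $\iota$ the inclusion) is $(c_1,c_1)$-continuous by Theorem~\ref{composition}, so Theorem~\ref{Rosenfeld'sAFPP} yields $x\in Q$ with $x\adjeq_{c_v}\hat g(x)$. Applying $r$ and using that $r$ preserves $c_v$-adjacency while fixing the point $\hat g(x)\in X$ gives $r(x)\adjeq_{c_v}g(r(x))$, a $c_v$-approximate fixed point of $g$ lying in $X$.

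The step I expect to require the most care is this last transfer, not the two manipulations that flank it. The retraction principle of Theorem~\ref{BEKLLretractionThm} is stated for the diagonal property $AFPP_S(\kappa,\kappa,\kappa)$, whereas here the tolerance adjacency $c_v$ is in general more permissive than the domain adjacency $c_1$, so one must check that the relation $x\adjeq_{c_v}\hat g(x)$ actually survives the application of $r$. This is exactly the point at which the $(c_v,c_v)$-continuity of the clamp map is used; granting it, the proof is complete.
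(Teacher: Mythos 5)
Your proposal is correct, and it shares the paper's top-level decomposition but handles the substantive step by a genuinely different argument. The paper's proof is a two-clause remark: (i) a $(c_u,c_1)$-continuous self-map is $(c_1,c_1)$-continuous (citing \cite{BxAlt}), and (ii) Rosenfeld's proof of Theorem~\ref{Rosenfeld'sAFPP} ``can be easily modified'' to an arbitrary box $\Pi_{i=1}^v[a_i,b_i]_{\Z}$. Your step (i) is the same reduction, except you prove it directly from the definitions (correctly: $c_1$-adjacency implies $c_u$-adjacency, so $(c_u,c_1)$-continuity constrains a superset of the pairs constrained by $(c_1,c_1)$-continuity) rather than citing the reference. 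For step (ii), however, you do not reopen Rosenfeld's argument at all: you embed the box in the cube $Q=[0,N]_{\Z}^v$, use the coordinatewise clamp $r:Q\to X$, apply Theorem~\ref{Rosenfeld'sAFPP} to $\iota\circ g\circ r$ as a black box, and push the approximate fixed point back into $X$ using the $(c_v,c_v)$-continuity of $r$. This is where your approach earns its keep: as you note, Theorem~\ref{BEKLLretractionThm} only covers the diagonal property $AFPP_S(\kappa,\kappa,\kappa)$, so the non-diagonal tolerance $c_v$ forces the by-hand check that $x\adjeq_{c_v}\hat g(x)$ survives applying $r$ (and that $r$ fixes $\hat g(x)\in X$, so $r(\hat g(x))=g(r(x))$); your verification of this is sound. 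The trade-off: the paper's route, if written out, gives a direct generalization of Rosenfeld's coordinatewise argument with no auxiliary construction, whereas your route is fully rigorous given only the theorems as stated in the paper, at the cost of the clamping construction — arguably a more complete proof than the paper's one-sentence assertion.
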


\begin{thm}
\label{digitalPicProp}
Let $X = [0,n]_{\Z}^v \subset \Z^v$.
Then $X$ has the $AFPP_M(c_v,c_v,c_v^{\lceil n/2 \rceil})$. 
\end{thm}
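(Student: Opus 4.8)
The plan is to exhibit a single universal approximate fixed point---the center of the cube---that works for \emph{every} multivalued self-map of $X$ with nonempty point-images; continuity will enter only to guarantee that these point-images are nonempty. The guiding observation is that the stated bound $\lceil n/2 \rceil$ is exactly the radius of the cube $[0,n]_{\Z}^v$ in the $c_v$-path metric, so once we locate the center the result becomes almost immediate.

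First I would record the metric meaning of the adjacency $c_v^{\lceil n/2 \rceil}$: for $x,y \in X = [0,n]_{\Z}^v$ we have $x \adjeq_{c_v^k} y$ precisely when $\max_{1 \le i \le v} |x_i - y_i| \le k$. The only direction needing care is that a small coordinatewise distance yields a short $c_v$-path \emph{inside} $X$, since the definition of $c_v^k$ demands the connecting path remain in $X$. Starting at $x$, at each step I change every coordinate by $\mathrm{sign}(y_i - x_i) \in \{0,\pm 1\}$ toward $y_i$; each step is a single $c_v$-move, after $\max_i |x_i - y_i|$ steps we reach $y$, and every intermediate coordinate lies between $x_i$ and $y_i$, hence in $[0,n]$, so the path stays in $X$. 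Convexity of the cube is precisely what makes this work, and this is the one spot requiring a genuine (if routine) verification.

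Next I would set $C = (\lceil n/2 \rceil, \dots, \lceil n/2 \rceil) \in X$ and note that it is within $c_v^{\lceil n/2 \rceil}$ of every point: for any $z \in X$ and each $i$, $|\lceil n/2 \rceil - z_i| \le \max(\lceil n/2 \rceil,\, n - \lceil n/2 \rceil) = \lceil n/2 \rceil$, so $C \adjeq_{c_v^{\lceil n/2 \rceil}} z$ by the first step. Finally, given a $(c_v,c_v)$-continuous $F : X \multimap X$ induced by some single-valued $f : S(X,r) \to X$, the image $F(C) = \bigcup_{x' \in E_r^{-1}(C)} \{f(x')\}$ is nonempty (since $C \in E_r^{-1}(C)$), so choosing any $y \in F(C)$ gives $C \adjeq_{c_v^{\lceil n/2 \rceil}} y$; thus $C$ is an approximate fixed point and $X$ has the $AFPP_M(c_v,c_v,c_v^{\lceil n/2 \rceil})$. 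I expect no serious obstacle: the crux is recognizing that the center has covering radius $\lceil n/2 \rceil$, which both explains the stated bound and shows that continuity is used only to guarantee $F(C) \neq \varnothing$.
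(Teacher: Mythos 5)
Your proposal is correct and takes essentially the same approach as the paper: the paper's proof also selects a center point (with coordinates in $\{\lfloor n/2 \rfloor, \lceil n/2 \rceil\}$) and observes that every point of the cube, hence every point of $F(\mathrm{center})$, lies within a $c_v$-path of length at most $\lceil n/2 \rceil$. Your write-up merely fills in the path-construction and convexity details that the paper leaves implicit.
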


\begin{proof}
Let $x$ be a point of $X$ such that each coordinate of $x$ is a member of
$\{\lfloor n/2 \rfloor, \lceil n/2 \rceil\}$. Let $F: X \multimap X$ be
$(c_v,c_v)$-continuous. Then for some (indeed, every) $y \in F(x)$, there is a $c_v$-path
from $x$ to $y$ of length at most $\lceil n/2 \rceil$.
\end{proof}

As in Proposition~\ref{multiImpliesSingle}, we have the following.

\begin{cor}
Let $X = [0,n]_{\Z}^v \subset \Z^v$.
Then $X$ has the $AFPP_S(c_v,c_v,c_v^{\lceil n/2 \rceil})$. 
\end{cor}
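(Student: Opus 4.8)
The plan is to deduce this corollary from Theorem~\ref{digitalPicProp} in exactly the manner that Proposition~\ref{multiImpliesSingle} deduces the $AFPP_S$ from the $AFPP_M$: the single-valued case is recovered by regarding a continuous single-valued function as a continuous multivalued function whose point-images are singletons. Since the target adjacency $c_v^{\lceil n/2 \rceil}$ and the source/domain adjacencies $c_v$ are the same in both the theorem and the corollary, no additional adjacency bookkeeping is required beyond verifying that the reduction is legitimate under the subdivision-based definition of multivalued continuity.

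First I would take an arbitrary $(c_v,c_v)$-continuous single-valued function $f: X \to X$ and exhibit it as a $(c_v,c_v)$-continuous multivalued function. The cleanest route is the trivial subdivision $r=1$: here $S(X,1) = X$ and $E_1 = \id_X$, so $f$ itself is a single-valued $(c_v,c_v)$-continuous function on $S(X,1)$, and it induces the multivalued function $F: X \multimap X$ given by $F(x) = \{f(x)\}$. By the definition of multivalued continuity, $F$ is therefore $(c_v,c_v)$-continuous.

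Next I would apply Theorem~\ref{digitalPicProp} to $F$. This yields points $x \in X$ and $y \in F(x)$ with $x \adjeq_{c_v^{\lceil n/2 \rceil}} y$. Because $F(x) = \{f(x)\}$ is a singleton, necessarily $y = f(x)$, so we obtain $x \adjeq_{c_v^{\lceil n/2 \rceil}} f(x)$. By Definition~\ref{AFPP-general}, this is precisely an approximate fixed point witnessing that $X$ has the $AFPP_S(c_v,c_v,c_v^{\lceil n/2 \rceil})$.

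There is no substantive obstacle in this argument; it is a direct specialization. The only point meriting care is confirming that passing from a single-valued function to its singleton-valued multivalued counterpart genuinely preserves continuity in the subdivision sense used in Definition~\ref{AFPP-general}, which is handled by taking $r=1$, and matching the two notions of approximate fixed point under Definition~\ref{AFPP-general}. Both verifications are routine, so the corollary follows immediately.
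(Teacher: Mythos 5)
Your proof is correct and is exactly the paper's argument: the corollary is deduced from Theorem~\ref{digitalPicProp} by viewing a continuous single-valued function as a continuous multivalued function, just as in Proposition~\ref{multiImpliesSingle}. Your explicit verification via the trivial subdivision $r=1$ (so $S(X,1)=X$ and $F(x)=\{f(x)\}$) is a careful spelling-out of the observation the paper invokes implicitly.
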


The following (restated here in our terminology) is Theorem~1 of Han's paper~\cite{Han19}.
\begin{thm}
\label{HanThm}
Let $X = [-1,1]_{\Z}^v$ and $1 \le u \le v$. Then
$(X,c_u)$ has the $AFPP_S$ if and only if $u=v$.
\end{thm}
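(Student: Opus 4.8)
The plan is to prove the two implications separately, treating $u=v$ as the easy case and $u<v$ as the substantive one. For the ``if'' direction, suppose $u=v$ and observe that the origin $\mathbf{0}=(0,\dots,0)$ is $c_v$-adjacent-or-equal to \emph{every} point of $X=[-1,1]_{\Z}^v$: any $y\in X$ has all coordinates in $\{-1,0,1\}$, so the indices $i$ with $|y_i-0|=1$ number at most $v$, and every remaining coordinate of $y$ equals $0$, matching $\mathbf{0}$. Hence for an arbitrary $f\in C(X,c_v)$ we automatically have $\mathbf{0}\adjeq_{c_v} f(\mathbf 0)$, so $\mathbf 0$ is an approximate fixed point and $(X,c_v)$ has the $AFPP_S$. (Alternatively, this is the case $n=2$ of Theorem~\ref{digitalPicProp} together with Proposition~\ref{multiImpliesSingle}, since $\lceil 2/2\rceil=1$.)

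For the ``only if'' direction I would prove the contrapositive: if $u<v$, then $(X,c_u)$ fails the $AFPP_S$. The strategy is to exhibit a $c_u$-retract of $X$ that itself fails the $AFPP_S$ and then invoke Theorem~\ref{BEKLLretractionThm}. The retract I propose is the Boolean cube $Q=\{0,1\}^v\subset X$, with retraction $r:X\to Q$ given coordinatewise by $r(x)_i=\max(x_i,0)$. One checks that $r$ fixes $Q$ pointwise and is $c_u$-continuous: if $x\adj_{c_u}x'$, then at most $u$ coordinates differ (each by $1$) while the rest agree, and applying $\max(\cdot,0)$ coordinatewise can only shrink these differences, so $r(x)\adjeq_{c_u}r(x')$. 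Thus $Q$ is a $c_u$-retract of $X$.

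It then remains to show that $(Q,c_u)$ fails the $AFPP_S$ when $u<v$, for which I would use the bit-flip (antipodal) map $g:Q\to Q$, $g(x)=\mathbf 1-x$, where $\mathbf 1=(1,\dots,1)$. For every $x\in Q$ the displacement $g(x)-x$ is nonzero in \emph{all} $v$ coordinates, so $g(x)$ and $x$ differ in $v>u$ coordinates; hence $x\not\adjeq_{c_u}g(x)$, i.e.\ $g$ has no approximate fixed point. Since $g$ is a coordinatewise flip it is a $c_u$-isomorphism, hence $c_u$-continuous, so $(Q,c_u)$ fails the $AFPP_S$. Combining this with the retraction and the contrapositive of Theorem~\ref{BEKLLretractionThm} yields the claim. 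This step is exactly where $u<v$ is used: for $u=v$ the same $g$ moves each point only to a $c_v$-adjacent point, consistent with the ``if'' direction.

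The main obstacle is the ``only if'' direction, and specifically the choice of retract. The most natural attempt---the antipodal map $x\mapsto-x$ on all of $X$---has large displacement everywhere except at the center $\mathbf 0$, where it has a genuine fixed point, and it cannot be repaired locally: applying it to the unit vectors $\pm e_i$, which are $c_u$-neighbors of $\mathbf 0$, would force $f(\mathbf 0)$ to be $c_u$-adjacent-or-equal to both $e_i$ and $-e_i$ for each $i$, pinning every coordinate of $f(\mathbf 0)$ to $0$ and hence $f(\mathbf 0)=\mathbf 0$. Retracting onto $Q=\{0,1\}^v$ sidesteps this difficulty precisely because $Q$ omits the center of the cube, so the obstruction disappears and the clean bit-flip map becomes available. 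Verifying the continuity of $r$ and the adjacency bookkeeping for $g$ are the only routine calculations.
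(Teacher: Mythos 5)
Your proof is correct and follows essentially the same route as the paper: the ``if'' direction is the paper's observation (Example~\ref{AFPPexl}(3)) that the origin is $c_v$-adjacent to every other point, and the ``only if'' direction is exactly the paper's combination of Proposition~\ref{uCube} (the coordinatewise flip on $[0,1]_{\Z}^v$ has no $c_u$-approximate fixed point for $u<v$), the clamping retraction of Theorem~\ref{containsUCube}, and Theorem~\ref{BEKLLretractionThm}. No gaps; your continuity checks for the retraction and the flip map match the paper's.
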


We show below that Theorem~\ref{HanThm} is correct. This is necessary, since
Han fails to give a correct proof for either 
implication of this theorem.
\begin{itemize}
    \item Han offers two ``proofs" of the assertion that $(X,c_v)$ has 
          the $AFPP_S$. Both of Han's arguments are incorrect.
          \begin{enumerate}
              \item Han's first ``proof" of the assertion that $(X,c_v)$ has 
          the $AFPP_S$ is based on the false assertion that $f \in C(X,c_u)$ implies 
          $f \in C(X,c_v)$. The latter assertion is
          incorrect, as shown in Example~\ref{refuteHan} below.
          \item Han's second ``proof" argues that
                assuming otherwise yields a
                contradiction. He gives an example, for
                $v=2$, of a self-map on $X$. He
                claims without explanation that
                if $(X,c_v)$ fails to have 
                the $AFPP_S$, then this map shows
                that all self maps on $(X,c_v)$ are discontinuous; further, he offers
                no argument that this example generalizes to all self maps on $X$ for
                all dimensions~$v$. 
          \end{enumerate} 
          That $(X,c_v)$ has the $AFPP_S$ is proven 
          below at Example~\ref{AFPPexl}(3).
          Han's focus on the point $(0,0)$ raises
          the possibility that he had such an example
          in mind for his second ``proof," but
          this is not clear.
    \item Han's argument for the assertion that $(X,c_u)$ 
          does not have the $AFPP_S$ for $u < v$ is incorrect, as 
          it discusses only particular self-maps
          on the digital images $([-1,1]_{\Z}^2, c_1)$ and
          $([-1,1]_{\Z}^3, c_2)$,
          with no indication that these examples
          generalize. Theorem~\ref{containsUCube}, below, correctly shows this assertion.
\end{itemize}

\begin{exl}
\label{refuteHan}
Let $X=[-1,1]_{\Z}^2$. Let $f: X \to X$ be defined by
\[ f(x,y) = \left \{ \begin{array}{ll}
     (\min\{1,x+1\}, y) &  \mbox{if } y \in \{-1,0\}; \\
     (x,0) & \mbox{if } y=1.
    \end{array}    \right .
\]
See 
Figure~2.
Then it is easily seen that $f \in C(X,c_1)$. However, $f \not \in C(X,c_2)$, since
$(-1,1) \adj_{c_2} (0,0)$, but $f(-1,1)=(-1,0)$ and $f(0,0) = (1,0)$ are not $c_2$-adjacent.
\end{exl}

\begin{figure}[htb]
\begin{center}
\includegraphics[width=3in]{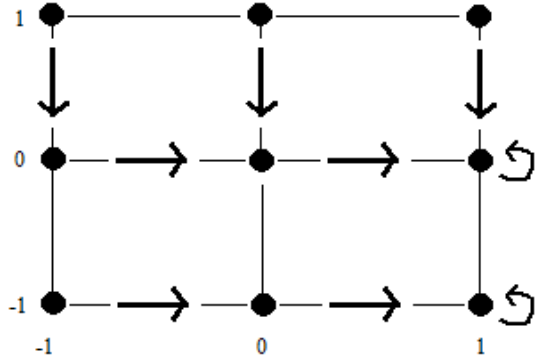}
\label{refuteHanIllustration}
\caption{The function $f$ of Example~\ref{refuteHan}. For each $x \in X$, an arrow
indicates $f(x)$. This function belongs to $C(X,c_1)$, but not to $C(X,c_2)$.}
\end{center}
\end{figure}

\begin{prop}
\label{uCube}
Let $v$ be a positive integer, $v > 1$.
Let $X = [0,1]_{\Z}^v \subset \Z^v$.
Let $u \in [1,v-1]_{\Z}$.
Then $(X,c_u)$ does not have the $AFPP_S$.
\end{prop}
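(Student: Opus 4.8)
The plan is to exhibit a single $c_u$-continuous self-map of $X$ with no approximate fixed point; the natural candidate is the \emph{antipodal} (coordinate-complementing) map. First I would record how $c_u$-adjacency behaves on the cube $X = \{0,1\}^v$. Since every coordinate difference $|x_i - y_i|$ is $0$ or $1$, the second clause in the definition of $c_u$-adjacency is automatically satisfied, and two distinct points are $c_u$-adjacent exactly when they differ in at most $u$ coordinates. Thus, writing $d(x,y) = \#\{i : x_i \neq y_i\}$ for the Hamming distance, we have $x \adjeq_{c_u} y$ if and only if $d(x,y) \le u$.

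Next I would define $f : X \to X$ by $f(x) = (1-x_1, \ldots, 1-x_v)$, which indeed maps $X$ into itself. To verify continuity it suffices, by the adjacency characterization of continuity, to show that $x \adj_{c_u} x'$ implies $f(x) \adjeq_{c_u} f(x')$. The key observation is that complementing coordinates preserves coordinate differences, $|(1-x_i)-(1-x_i')| = |x_i - x_i'|$, so $d(f(x), f(x')) = d(x, x')$. Hence $d(x,x') \le u$ forces $d(f(x), f(x')) \le u$, and $f$ is $c_u$-continuous.

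Finally I would check that $f$ admits no approximate fixed point. For every $x \in X$ we have $d(x, f(x)) = v$, and the hypothesis $u \le v-1$ gives $v > u$; so $x$ and $f(x)$ differ in more than $u$ coordinates, whence $x \not\adjeq_{c_u} f(x)$ by the description above. Since this holds for all $x \in X$, the map $f$ witnesses that $(X,c_u)$ fails the $AFPP_S$.

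I do not expect a serious obstacle here. The only steps requiring care are the translation of $c_u$-adjacency on $\{0,1\}^v$ into Hamming distance and the continuity check for the antipodal map; both become immediate once one notices that coordinate complementation is a Hamming isometry that moves every point the maximal distance $v$.
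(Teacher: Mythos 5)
Your proposal is correct and coincides with the paper's own proof: both use the coordinate-complementing map $x \mapsto (1-x_1,\ldots,1-x_v)$, verify continuity via the fact that complementation preserves the number of differing coordinates, and conclude from $d(x,f(x)) = v > u$ that no approximate fixed point exists. Your explicit reformulation of $c_u$-adjacency on $\{0,1\}^v$ as Hamming distance at most $u$ is a slightly more detailed write-up of what the paper treats as ``easily seen,'' but the argument is identical.
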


\begin{proof}
Let $f: [0,1]_{\Z} \to [0,1]_{\Z}$ be defined by $f(z)=1-z$. Let $F: X \to X$ be
defined by
\[ F(x_1,\ldots,x_v) = (f(x_1), \ldots, f(x_v)).
\]
It is easily seen that if $x, x' \in X$, then $F(x)$ and $F(x')$ differ in as many
coordinates as $x$ and $x'$, and therefore $F \in C(X,c_u)$. However, for
each $x \in X$, $x$ and $F(x)$ differ in all $v$ coordinates, so $F$ has no
$c_u$-approximate fixed point.
\end{proof}

\begin{thm}
\label{containsUCube}
Let $X \subset \Z^v$ be such that $X$ has a subset $Y = \Pi_{i=1}^v [a_i,b_i]_{\Z}$, where
$v>1$; for all indices $i$, $b_i \in \{a_i,a_i + 1\}$; and, for at least 2 indices $i$,
$b_i=a_i+1$. Then $(X,c_u)$ fails to have the $AFPP_S$ for $1 \le u < v$. (This contains one
of the assertions of Theorem~\ref{HanThm}.)
\end{thm}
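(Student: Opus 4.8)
The plan is to reduce Theorem~\ref{containsUCube} to Proposition~\ref{uCube} via the retraction machinery already in place, rather than constructing a fresh counterexample map on all of $X$. The key structural observation is that the hypothesis forces $Y$ to contain a combinatorial copy of the unit $v$-cube $[0,1]_{\Z}^v$ as a $c_u$-retract, and Theorem~\ref{BEKLLretractionThm} tells us that the $AFPP_S$ passes to $c_u$-retracts. Since Proposition~\ref{uCube} shows that $([0,1]_{\Z}^v, c_u)$ lacks the $AFPP_S$ whenever $1 \le u < v$, the contrapositive of Theorem~\ref{BEKLLretractionThm} would then show that $X$ cannot have the $AFPP_S$ either.

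First I would isolate, inside $Y = \Pi_{i=1}^v [a_i,b_i]_{\Z}$, a $v$-dimensional unit subcube. Each factor $[a_i,b_i]_{\Z}$ has either one point ($b_i = a_i$) or two points ($b_i = a_i+1$); the hypothesis guarantees at least two factors are of the latter type. For the factors with $b_i = a_i$ there is only one choice of coordinate; for the factors with $b_i = a_i+1$ I would use both values $a_i$ and $a_i+1$. The difficulty is that $Y$ itself need not literally be $[0,1]_{\Z}^v$ when some factors are singletons, so I would instead show directly that $X$ admits a $c_u$-retraction onto a set $C$ that is $c_u$-isomorphic to $[0,1]_{\Z}^{v}$ after relabeling the singleton coordinates as fixed constants. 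Concretely, I would define $Z = \Pi_{i=1}^v \{a_i, a_i+1\}$ (a genuine unit $v$-cube by translation) and build the counterexample map on $X$ by mimicking the map $F$ of Proposition~\ref{uCube} on the free coordinates while leaving the others untouched.

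Rather than invoking retractions abstractly, the cleaner route is to transplant the proof of Proposition~\ref{uCube} directly onto $X$. I would define $g: X \to X$ coordinatewise by letting $g$ flip each of the coordinates on which $Y$ is two-valued (sending $a_i \mapsto a_i+1$ and $a_i+1 \mapsto a_i$, and projecting any out-of-range $X$-coordinate in that slot into $\{a_i,a_i+1\}$) and fixing all other coordinates. As in Proposition~\ref{uCube}, any two $c_u$-adjacent points of $X$ differ in at most $u$ coordinates, and the flip changes the number of differing coordinates by the same count, so $g \in C(X,c_u)$; here I must verify the out-of-range clamping does not destroy continuity, which it does not since clamping is itself $(c_u,c_u)$-continuous. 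For every $x \in X$, its image $g(x)$ differs from $x$ in all $v$ of the flipped-and-projected coordinates relevant to $Y$ — in particular in the at-least-two genuinely free coordinates — so $x$ and $g(x)$ are never $c_u$-adjacent when $u < v$, giving no approximate fixed point.

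The main obstacle will be handling the coordinates of $X$ that lie \emph{outside} the sub-box $Y$: the map must be a self-map of all of $X$, not merely of $Y$, and it must remain $c_u$-continuous globally. The map $g$ in Proposition~\ref{uCube} is defined only on a product of unit intervals, whereas here $X$ is an arbitrary superset of $Y$ in $\Z^v$. I would address this by composing the coordinatewise flip with the nearest-point clamp onto $\{a_i, a_i+1\}$ in each relevant coordinate, which is a $(c_1,c_1)$- and hence $(c_u,c_u)$-continuous retraction-like map, and I would need to confirm that the composite lands in $X$ (it lands in $Y \subset X$ by construction) and that it still separates every point from its image by all $v$ coordinates. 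Once that verification is in hand, the absence of a $c_u$-approximate fixed point is immediate for $u < v$, completing the proof.
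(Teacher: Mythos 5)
Your argument has a genuine gap at its last step, and it is the same gap that undermines the paper's own proof. Let $k$ denote the number of indices with $b_i=a_i+1$; the hypothesis guarantees only $k\ge 2$. When $k<v$, your map $g$ moves a point of $Y$ in exactly $k$ coordinates, not in ``all $v$'' of them as you assert: the singleton slots are left alone, and flip-after-clamp changes only the two-valued slots. Two points that differ in exactly $k$ coordinates, each by $1$, \emph{are} $c_u$-adjacent whenever $u\ge k$, so your construction rules out approximate fixed points only for $1\le u<k$, while the theorem claims failure of the $AFPP_S$ for all $1\le u<v$. Two subsidiary problems stem from the same source: your cube $Z=\Pi_{i=1}^v\{a_i,a_i+1\}$ need not be contained in $X$ (the hypothesis gives only $Y\subset X$), so there is nothing to retract onto; and since you ``fix all other coordinates,'' the image of $g$ does not land in $Y$ (or even, necessarily, in $X$), contradicting your later claim that it lands in $Y$ by construction --- this one is repairable by clamping every coordinate into $[a_i,b_i]_{\Z}$, but the coordinate count above is not.

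The count cannot be repaired, because the statement itself is false in the range $k\le u<v$. Take $v=3$, $X=Y=[0,1]_{\Z}^2\times\{0\}\subset\Z^3$, $u=2$: the hypotheses of Theorem~\ref{containsUCube} hold with $k=2$, yet any two points of $X$ differ in at most two coordinates, each by $1$, so $(X,c_2)$ is a complete graph; hence every self-map (continuous or not) has an approximate fixed point, and $(X,c_2)$ has the $AFPP_S$. The paper's proof breaks at the same place as yours: it clamps $X$ onto $Y$ and then invokes Proposition~\ref{uCube} together with Theorem~\ref{BEKLLretractionThm}, but Proposition~\ref{uCube} concerns the full unit cube $[0,1]_{\Z}^v$, and $(Y,c_u)$ is isomorphic to such a cube only when $k=v$; for $k\le u$ it is instead a complete graph and has the $AFPP_S$. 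Both your argument and the paper's are valid precisely for $1\le u<k$, equivalently under the stronger hypothesis that $b_i=a_i+1$ for every $i$; happily that is the only case needed for the parenthetical application to Theorem~\ref{HanThm}, where one takes $Y=[0,1]_{\Z}^v\subset[-1,1]_{\Z}^v$.
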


\begin{proof}
Let $r: X \to Y$ be defined by its coordinate functions,
\[ p_i(r(x)) = \left \{ \begin{array}{ll}
   a_i & \mbox{if } p_i(x) \le a_i; \\
   b_i & \mbox{if } p_i(x) \ge b_i.
   \end{array} \right .
\]
It is easily seen that $r$ is a $c_u$-retraction of $X$ to $Y$. The assertion 
follows from Proposition~\ref{uCube} and Theorem~\ref{BEKLLretractionThm}.
\end{proof}

We expand on one of the assertions of Theorem~\ref{HanThm} as follows.

\begin{cor}
Let $X = \Pi_{i=1}^v [a_i,b_i]_{\Z}$ such that $b_i > a_i$ for at least
2 indices~$i$. Let $u \in \Z$, $1 \le u < v$.
Then $(X,c_u)$ does not have the $AFPP_S$.
\end{cor}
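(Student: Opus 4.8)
The plan is to reduce this corollary directly to Theorem~\ref{containsUCube}, which already establishes failure of the $AFPP_S$ for any image containing a suitable ``small box.'' Since the hypothesis guarantees that at least two of the intervals $[a_i,b_i]_{\Z}$ are nondegenerate, the strategy is to carve out of $X$ a sub-box $Y$ in which every side has length $0$ or $1$, with at least two sides of length exactly~$1$, and then invoke Theorem~\ref{containsUCube} with this $Y$.

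Concretely, I would define $Y = \Pi_{i=1}^v [a_i, b_i']_{\Z}$ by setting $b_i' = a_i + 1$ for each index $i$ with $b_i > a_i$, and $b_i' = a_i$ for each index $i$ with $b_i = a_i$. Because $a_i + 1 \le b_i$ whenever $b_i > a_i$, each factor $[a_i, b_i']_{\Z}$ is contained in $[a_i, b_i]_{\Z}$, so $Y \subset X$. By construction, $b_i' \in \{a_i, a_i + 1\}$ for every $i$, and $b_i' = a_i + 1$ precisely at the indices where $b_i > a_i$; by hypothesis there are at least two such indices. Since having at least two nondegenerate directions forces $v \ge 2 > 1$, the set $Y$ satisfies every structural hypothesis of Theorem~\ref{containsUCube}.

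Applying Theorem~\ref{containsUCube} to this $Y \subset X$ then yields that $(X, c_u)$ fails to have the $AFPP_S$ for every $u$ with $1 \le u < v$, which is exactly the claim.

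I do not anticipate any real obstacle here: the only points requiring care are verifying the containment $Y \subset X$ and confirming that the two conditions of Theorem~\ref{containsUCube} (all sides of length at most~$1$, and at least two sides of length exactly~$1$) hold simultaneously, both of which follow immediately from the index bookkeeping above. The substantive content already resides in Theorem~\ref{containsUCube} (and ultimately in the coordinatewise reflection map of Proposition~\ref{uCube} together with the retraction theorem, Theorem~\ref{BEKLLretractionThm}); this corollary is simply the observation that an arbitrary box with at least two nondegenerate directions always contains such a minimal box.
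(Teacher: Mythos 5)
Your proposal is correct and takes essentially the same approach as the paper: the paper's proof consists of a single sentence citing Theorem~\ref{containsUCube}, and your explicit construction of the sub-box $Y$ (truncating each nondegenerate factor to length~$1$) is exactly the routine verification that the paper leaves implicit.
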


\begin{proof}
It follows easily from
Theorem~\ref{containsUCube} that $(X,c_u)$ does not have the $AFPP_S$.
\end{proof}

\begin{lem}
\label{AFPPforUnitCubes}
Let $X = [0,1]_{\Z}^v \subset \Z^v$. Then
given $u \in [1,v]_{\Z}$ and a $(c_u,c_v)$-continuous multivalued function
$F: X \multimap X$, every $x \in X$ satisfies 
$\{x\} \adjeq_{c_v} F(x)$.
\end{lem}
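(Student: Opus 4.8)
The plan is to reduce the statement to a purely structural observation about the graph $(X,c_v)$ on the unit cube, namely that under $c_v$ this graph is complete. Every point of $X=[0,1]_{\Z}^v$ lies in $\{0,1\}^v$, so for any two distinct $x,y\in X$ each coordinate difference $|p_i(x)-p_i(y)|$ is either $0$ or $1$. Consequently there is no index $j$ with $|p_j(x)-p_j(y)|\notin\{0,1\}$, so the second clause in the definition of $c_v$-adjacency is satisfied vacuously, and the number of indices where the coordinates differ by $1$ is at most $v$, so the first clause holds as well. Hence $x\adj_{c_v} y$ whenever $x\neq y$, and therefore $x\adjeq_{c_v} y$ for \emph{every} pair of points $x,y\in X$.

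Next I would record that each point-image $F(x)$ is nonempty. Since $F$ is $(c_u,c_v)$-continuous, by definition it is induced by some single-valued $(c_u,c_v)$-continuous $f\colon S(X,r)\to X$, so $F(x)=\bigcup_{x'\in E_r^{-1}(x)}\{f(x')\}$. The point $x$ has integer coordinates and satisfies $E_r(x)=x$, so $x\in E_r^{-1}(x)$; thus $f(x)\in F(x)$ and in particular $F(x)\neq\emptyset$.

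The conclusion then follows immediately: I would pick any $y\in F(x)$. Since $F$ maps into $X$ we have $y\in X$, and by the first paragraph $x\adjeq_{c_v} y$. By the definition of adjacency between a point and a set, the existence of such a $y$ is exactly the statement $\{x\}\adjeq_{c_v} F(x)$.

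I do not expect a genuine obstacle here: the lemma is essentially the observation that $(\{0,1\}^v,c_v)$ is a complete graph, so the approximate-fixed-point condition at level $c_v$ is trivially met by any point of the image. The only place warranting care is the nonemptiness of $F(x)$ — without it, $\{x\}\adjeq_{c_v} F(x)$ would fail by vacuity — and this is precisely where the continuity hypothesis is used, since it forces $F$ to be induced by a single-valued function and hence to have nonempty point-images.
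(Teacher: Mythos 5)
Your proposal is correct and follows essentially the same route as the paper, whose entire proof is the observation that $(X,c_v)$ is a complete graph. You simply spell out that observation (coordinates in $\{0,1\}$ force $c_v$-adjacency of distinct points) and add the verification that $F(x)\neq\emptyset$, which the paper leaves implicit.
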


\begin{proof}
The assertion follows from the observation that $(X,c_v)$ is a complete graph.
\end{proof}

Theorem~\ref{containsUCube} states a severe limitation on the $AFPP_S$ and
on the $AFPP_M$ for digital images $X \subset \Z^v$ and the $c_u$ adjacency,
where $1 \le u < v$. We now consider the case $u=v$.
 
The paper~\cite{CinKar} introduces the $AFPP_M$ but provides
no nontrivial examples of digital images with this property.
Simple examples are given in the following.

\begin{exl}
\label{AFPPexl}
The following digital images have the $AFPP_M$.
\begin{enumerate}
    \item A singleton.
    \item $([0,1]_{\Z}^v, c_v)$.
    \item $([-1,1]_{\Z}^v, c_v)$. (By Proposition~\ref{multiImpliesSingle},
          this contains one of the assertions of Theorem~\ref{HanThm}.)
\end{enumerate}
\end{exl}

\begin{proof}
\begin{enumerate}
    \item The case of a singleton is trivial.
    \item The assertion for $([0,1]_{\Z}^v, c_v)$ follows
          from Lemma~\ref{AFPPforUnitCubes}.
    \item The assertion for $([-1,1]_{\Z}^v, c_v)$ follows
          from the observation that the point $(0,0,\ldots,0)$ is
          $c_v$-adjacent to every other point of the image,
          hence must be an approximate fixed point for every
          $(c_v,c_v)$-continuous multivalued self-map on this image.
\end{enumerate}
\end{proof}

\section{Retraction and preservation of $AFPP_M$}
\label{retractSec}
Retraction preserves the $AFPP_S$~\cite{BEKLL}.
The paper~\cite{CinKar} claims the following analog for the $AFPP_M$
(restated here in our terminology) as its Theorem~4.4.
\begin{quote}
Let $X \subset \Z^v$ such that $(X,c_v)$ has the $AFPP_M$. Let $Y \subset X$ be a $(c_v,c_v)$-continuous
multivalued retract of $X$. Then $(Y,c_v)$ has the $AFPP_M$.
\end{quote}
However, there are errors in the argument offered as proof of this claim, so the
assertion must be regarded as unproven. The authors argue as follows.
Given a $(c_v,c_v)$-continuous multivalued $F: Y \multimap Y$, 
let $I: Y \to X$ be the inclusion and $R: X \multimap Y$ a $(c_v,c_v)$-continuous
multivalued retraction. Then $G = I \circ F \circ R: X \multimap X$ is shown to be
$(c_v,c_v)$-continuous and therefore has an approximate fixed point $x_0$.
Thus, there exists $x_1 \in G(x_0)$ such that $x_1 \adjeq_{c_v} x_0$. By continuity of $G$ we have
\[ x_1 \in G(x_0) \adjeq_{c_v} G(x_1) 
   = I \circ F \circ R(x_1).
\]
\begin{itemize}
\item There follows the claim that the latter is equal to  $I \circ F(x_1)$; this is
unjustified since we do not know whether $x_1$ belongs to $Y$. Indeed, we do not know if
$F(x_1)$ is defined.
\item Further, after observing that we would have $I \circ F(x_1) = F(x_1)$, it is claimed that
      $x_1 \adjeq_{c_v} F(x_1)$, but this is unjustified since we can not assume
      that $G(x_0)$ and $G(x_1)$ are singletons.
\end{itemize}

\section{Universal and weakly universal multivalued functions}
Universal and weakly universal single-valued functions for
digital images were introduced in~\cite{BEKLL} 
and~\cite{BoxerRFP3}, respectively. The paper~\cite{CinKar}
seeks to obtain analogous results for multivalued functions.
In this section, we correct a small error of~\cite{CinKar} 
in its treatment of universal multivalued functions. 
The error of concern parallels an error of~\cite{BEKLL},
and our corrections parallel those in~\cite{BoxerRFP3}.
The error is due to using {\em universal} multivalued 
functions rather than {\em weak universal} multivalued
functions (see Definition~\ref{univM}, below);
this error propagates through multiple assertions of~\cite{CinKar}.

\begin{definition}
\label{univM}
Let $(X,\kappa)$ and $(Y, \lambda)$ be digital images. Let $F: X \multimap Y$ be a
$(\kappa, \lambda)$-continuous multivalued function.
\begin{itemize}
    \item $F$ is a {\em universal} for $(X, Y)$~{\rm \cite{CinKar}} if given a $(\kappa, \lambda)$-continuous
          multivalued function $G: X \multimap Y$, there exist $x \in X$ and $y \in F(x)$,
          $y' \in G(x)$ such that $y \adj_{\lambda} y'$.
    \item $F$ is a {\em weak universal} for $(X, Y)$ if given a $(\kappa, \lambda)$-continuous
          multivalued function $G: X \multimap Y$, there exist $x \in X$ and $y \in F(x)$,
          $y' \in G(x)$ such that $y \adjeq_{\lambda} y'$.
\end{itemize}
\end{definition}

Proposition~3.1 of~\cite{CinKar} asserts the following (quoted verbatim).
\begin{quote}
    Let $X$ and $Y$ be digital images. Suppose $Y$ is finite. Then
    the multivalued function $F: X \multimap Y$ defined by $F(x) = Y$ for all $x \in X$ is universal.
\end{quote}

This assertion is incorrect, as shown by the following. Let $(X,\kappa)=(Y,\kappa)$ be a digital
image with a single point $x_0$. Since there is no point in $X$ adjacent to $x_0$, no universal for $(X,X)$ exists, contrary
to the assertion of Proposition~3.1 of~\cite{CinKar}.
However, we have the following (note we do not need to assume that $Y$ 
is finite).

\begin{prop}
Let $X$ and $Y$ be digital images. 
Then the multivalued function $F: X \multimap Y$ defined by $F(x) = Y$ for 
all $x \in X$, is a weak universal.
\end{prop}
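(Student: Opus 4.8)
The plan is to show directly that $F(x)=Y$ for all $x$ is a weak universal by unwinding the definition. Let $G:X\multimap Y$ be an arbitrary $(\kappa,\lambda)$-continuous multivalued function. By Definition~\ref{univM}, I must produce a point $x\in X$, a point $y\in F(x)$, and a point $y'\in G(x)$ with $y\adjeq_{\lambda}y'$. Since $G$ is a multivalued function, for any chosen $x\in X$ the set $G(x)$ is a nonempty subset of $Y$; pick any $y'\in G(x)$. Because $F(x)=Y$, in particular $y'\in F(x)$, so I may simply take $y=y'$. Then $y=y'$ gives $y\adjeq_{\lambda}y'$ by the reflexive part of the $\adjeq_{\lambda}$ relation (recall $x\adjeq_{\lambda}x'$ means $x\adj_{\lambda}x'$ or $x=x'$).

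Writing this out, first I would fix an arbitrary continuous $G:X\multimap Y$ and an arbitrary $x\in X$ (any single point suffices, and $X$ is nonempty as a digital image). Next I would observe that $G(x)\subset Y$ is nonempty, so there exists $y'\in G(x)$; set $y:=y'$. Finally, since $F(x)=Y\supseteq\{y'\}$ we have $y\in F(x)$, and $y=y'$ yields $y\adjeq_{\lambda}y'$ immediately. Hence $F$ satisfies the defining condition of a weak universal.

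The contrast with the (false) universal claim is worth noting explicitly: the universal version demands $y\adj_{\lambda}y'$ (strict adjacency, no equality allowed), which is exactly what fails in the singleton counterexample discussed just above, since there no two distinct points are adjacent. The weak version succeeds precisely because equality is permitted in $\adjeq_{\lambda}$, so the constant-$Y$ map can always match any $G$-value with itself.

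There is essentially no obstacle here: the only thing to verify with any care is that $F$ as defined is itself $(\kappa,\lambda)$-continuous, so that the phrase ``weak universal'' applies (the definition requires $F$ to be continuous). The constant-image map $F(x)=Y$ need not be continuous for arbitrary $Y$, so strictly I should either assume $Y$ is connected (making $F(A)=Y$ connected for every connected $A$, and any two point-images trivially adjacent since they coincide) or note that the intended reading treats $F$ as continuous by hypothesis. I would add a short remark addressing the continuity of $F$ — this is the one place where the argument is not completely automatic.
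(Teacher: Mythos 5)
Your proof is correct and takes the same route as the paper's, whose entire proof is simply ``This follows from Definition~\ref{univM}'': choosing $y = y' \in G(x) \subseteq Y = F(x)$ and invoking the equality case of $\adjeq_{\lambda}$ is exactly that unwinding, spelled out. Your closing caveat about the continuity of $F$ is well taken---the paper glosses over it entirely, even though under the subdivision-based definition of multivalued continuity the constant map $F(x)=Y$ cannot be continuous when $Y$ is infinite (point-images induced on a finite subdivision are finite) or disconnected---but that is a gap in the paper's own statement, not in your argument.
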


\begin{proof}
This follows from Definition~\ref{univM}.
\end{proof}

\begin{definition}
{\rm ~\cite{CinKar}}
A multivalued function $F: (X,\kappa) \multimap (Y,\lambda)$ is {\em injective}
if $F(x)=F(y)$ implies $x=y$. A injective multivalued function $G: X \multimap X$ is an
{\em identity} if $x \in G(x)$ for all $x \in X$.
\end{definition}
 
Proposition~4.1 of~\cite{CinKar} asserts the following (restated here in our terminology).
\begin{quote}
    Let $X$ be a digital image. Then $(X, \kappa)$ has the $AFPP_M$ if and only if
    an identity multivalued function is universal.
\end{quote}

This assertion is incorrect, as shown by the example of a digital image $X$ with a single point
$x_0$; $X$ trivially has the $AFPP_M$, but no multivalued
identity is universal since $x_0$ has no
adjacent point~$y$. However, we have the following (notice we show that we can take our
identity multivalued function to be the unique single-valued identity function $1_X$).

\begin{prop}
\label{M-andWeak}
 Let $X$ be a digital image. Then $(X, \kappa)$ has the $AFPP_M$ if and only if
the identity function $1_X$ is a weak universal.
\end{prop}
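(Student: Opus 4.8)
The plan is to prove both implications directly from the definitions, exploiting the fact that the single-valued identity $1_X$, viewed as a multivalued function with $1_X(x) = \{x\}$, is $(\kappa,\kappa)$-continuous. First I would unwind what it means for $1_X$ to be a weak universal. By Definition~\ref{univM}, $1_X$ is a weak universal for $(X,X)$ if and only if for every $(\kappa,\kappa)$-continuous multivalued $G\colon X \multimap X$ there exist $x \in X$, $y \in 1_X(x) = \{x\}$, and $y' \in G(x)$ with $y \adjeq_{\kappa} y'$. Since $y$ is forced to equal $x$, this condition collapses to: there exist $x \in X$ and $y' \in G(x)$ with $x \adjeq_{\kappa} y'$. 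But this is precisely the statement that $x$ is an approximate fixed point of $(G,\kappa)$ in the sense of Subsection~\ref{approxPrelim}.

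For the forward direction, suppose $(X,\kappa)$ has the $AFPP_M$. Given any $(\kappa,\kappa)$-continuous $G\colon X \multimap X$, the $AFPP_M$ guarantees an approximate fixed point, i.e.\ some $x$ and $y' \in G(x)$ with $x \adjeq_{\kappa} y'$. Taking $y = x \in 1_X(x)$ supplies exactly the triple required by the weak universal condition, so $1_X$ is a weak universal. For the converse, suppose $1_X$ is a weak universal. Then for any $(\kappa,\kappa)$-continuous multivalued self-map $G$, the weak universal property applied to $G$ yields $x$ and $y' \in G(x)$ with $x \adjeq_{\kappa} y'$, which is an approximate fixed point of $G$; hence $(X,\kappa)$ has the $AFPP_M$.

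The two directions are genuinely just the two readings of the same biconditional once the definitions are aligned, so there is no deep obstacle. The one point that requires care — and the step I would foreground — is verifying that $1_X$ is itself a legitimate $(\kappa,\kappa)$-continuous multivalued function, since Definition~\ref{univM} requires the weak universal to be continuous before the universality condition is even tested. This is immediate because $1_X$ is induced (via the trivial subdivision $r=1$, where $S(X,1)=X$ and $E_1$ is the identity) by the single-valued continuous function $1_X\colon X \to X$, so it qualifies as a continuous multivalued function. I would state this observation explicitly and then let the collapse $1_X(x)=\{x\}$ do the rest; the remainder is the routine matching of quantifiers described above.
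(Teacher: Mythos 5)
Your proof is correct and follows essentially the same route as the paper's: both directions are the straightforward matching of the weak-universal condition for $1_X$ (with $1_X(x)=\{x\}$ forcing $y=x$) against the definition of an approximate fixed point. Your explicit check that $1_X$ is a legitimate $(\kappa,\kappa)$-continuous multivalued function is a nice touch of rigor that the paper leaves implicit (it is its Proposition~\ref{multiImpliesSingle} observation), but it does not change the argument.
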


\begin{proof}
Our argument requires only minor changes in the 
argument given for its analog in~\cite{CinKar}.

Suppose $(X, \kappa)$ has the $AFPP_M$. Then given a $(\kappa,\kappa)$-continuous
multivalued function $F: X \multimap X$, there exist $x \in X$ and $y \in F(x)$ such
that $1_X(x)=x \adjeq y \in F(x)$. Therefore, $1_X$ is a weak universal.

Suppose $1_X$ is a weak universal. Then for any $(\kappa,\kappa)$-continuous
multivalued function $F: X \multimap X$, there exists $x \in X$ such that
$x = 1_X(x) \adjeq_{\kappa} y$ for some $y \in F(x)$.
Thus, $(X, \kappa)$ has the $AFPP_M$.
\end{proof}

\section{Further remarks}
We have studied approximate fixed point properties for
both single-valued and multivalued digitally continuous
functions on digital images. 

The question of whether the converse of 
Proposition~\ref{multiImpliesSingle} is valid appears to be
a difficult problem. I.e., we have the following.

\begin{question}
\label{singleImplyMultiple}
    If $(X,\kappa)$ has the $AFPP_S$, does 
    $(X,\kappa)$ has the $AFPP_M$?
\end{question}

We also have not answered the following.

\begin{question}
    Let $X = \Pi_{i=1}^v [a_i,b_i]_{\Z}$, where for at least
    2 indices~$i$ we have $b_i > a_i$. Does $(X,c_v)$ have the
    $AFPP_S$?
\end{question}

We saw in Section~\ref{retractSec} that the following question remains unanswered.
\begin{question}
    Let $X \subset \Z^v$ such that $(X,c_v)$ has the $AFPP_M$. Let $Y \subset X$ be a $(c_v,c_v)$-continuous
multivalued retract of $X$. Does $(Y,c_v)$ have the $AFPP_M$?
\end{question}

Since~\cite{BoxerRFP3} $1_X$ is a weak universal function for
single-valued continuous self-maps on $(X,\kappa)$ if and
only if $(X,\kappa)$ has the $AFPP_S$, in view of
Proposition~\ref{M-andWeak}, a positive solution to the
following question would yield a positive solution to
Question~\ref{singleImplyMultiple}.

\begin{question}
    If $1_X$ is a weak universal function for $C(X,\kappa)$,
    is $1_X$ is a weak universal for continuous multivalued
    functions on $(X,\kappa)$?
\end{question}

\section{Acknowledgement}
The suggestions and corrections of an anonymous reviewer are gratefully acknowledged.

\bibliographystyle{amsplain}

\end{document}